\def\beq{\begin{equation}}
\def\eeq{\end{equation}}
\def\ba{\begin{array}}
\def\ea{\end{array}}
\numberwithin{equation}{section}
\newenvironment{abs}{\textbf{Abstract}\mbox{  }}{ }
\newenvironment{key words}{\textbf{Keywords}\mbox{  }}{ }
\newtheorem{theorem}{Theorem}[section]
\newtheorem{corollary}[theorem]{\textbf{Corollary}}
\newtheorem{proposition}[theorem]{\textbf{Proposition}}
\newtheorem{lemma}[theorem]{Lemma}
\renewenvironment{proof}{\noindent{\textbf{Proof.}}}{\hfill$\Box$}
\theoremstyle{remark}
\theoremstyle{plain}
\begin{document}
\title{\textbf{Nonlinear elliptic equations on the upper half space}}
\author  {Sufang Tang, Lei Wang  and Meijun Zhu}

\address{Sufang Tang, School of Statistics, Xi'an University of Finance and
Economics, Xi'an, Shaanxi, 710100, P. R. China}
\email {sufangtang@163.com}

\address{ Lei Wang, Academy of Mathematics and Systems Sciences, Chinese Academy of Sciences, Beijing 100190, P.R. China, and Department of Mathematics, The University of Oklahoma, Norman, OK 73019, USA}

\email{wanglei@amss.ac.cn}

\address{ Meijun Zhu, Department of Mathematics,
The University of Oklahoma, Norman, OK 73019, USA}

\email{mzhu@math.ou.edu}

%\date{Starting on 3-7-2019, 5-18, 5-24, 5-28, 6-8, 6-9 final}
\maketitle

% ----------------------------------------------------------------
\noindent
\begin{abs}
In this paper we shall classify  all positive
solutions of $ \Delta u =a u^p$ on the upper half
space $ H =\Bbb{R}_+^n$
with nonlinear
boundary condition $ {\partial u}/{\partial t}= - b u^q  $ on $\partial H$ for both positive parameters  $a, \ b>0$. We will prove that for $p \ge {(n+2)}/{(n-2)}, 1\leq q<{n}/{(n-2)}$ (and $n \ge 3$)  all positive solutions are functions of last variable;  for $p= {(n+2)}/{(n-2)}, q= {n}/{(n-2)}$ (and $n \ge 3$)  positive
solutions must be either some functions depending only on last  variable, or  radially symmetric functions.
\end{abs}\\

\begin{key words} Curvature equation, Kelvin transform, Moving plane method,  Moving sphere method
\end{key words}\\
%\textbf{Mathematics Subject Classification(2000).}
%35BXX \indent
%---------------------------------------------------------------------------------
\section{\textbf{Introduction}\label{Section 1}}

Let $ H =\Bbb{R}_+^n= \{ (x', t) \ | \ x'=(x_1, x_2,..., x_{n-1}) \in
\Bbb{R}^{n-1}, \ t>0 \}$ be the upper half space in $\Bbb{R}^n$ with $n
\ge 3$.  We are interested in the following elliptic equation with a nonlinear boundary condition.
\begin{equation}\label{elliptic}
\left\{
\aligned
& \Delta u  = au^p, \ \ u \ge 0 \ \ {in} \ \ \ H  =\Bbb{R}_+^n,\\
& \frac {\partial u}{\partial t} = bu^{q} \ \ \  {on} \ \ \
\partial H= \Bbb{R}^{n-1}.
\endaligned
\right.
\end{equation}

Equation \eqref{elliptic} was early studied by J. Escobar \cite {Escobar90} in his work on Yamabe problem on compact manifolds with boundary. Let $(M, g_0)$ be a compact Riemannian manifold with boudary of dimension $n\geq 3$. The Yamabe problem is to find metrics $g$ conformally equivalent to $g_0$ of constant scalar curvature $R_g$, with constant mean curvature $h_g$ on $\partial M$. When $M$ is the unit ball in $\mathbb{R}^n$, since $M$ is conformall equivalent to $\mathbb{R}^n_+$, the problem is reduced to equation \eqref{elliptic} for the critical case ($p=\frac{n+2}{n-2}$ and $q=\frac{n}{n-2}$). With suitable decay assumption, Escobar was able to obtain the classification result for $a<0, b<0,  p=\frac{n+2}{n-2}$ and $q=\frac{n}{n-2}$ (which mean $R_g\geq0$ and $h_g<0$ for the manifold). Later, the decay condition was removed by Li and Zhu \cite{LZ1995},  and  many interesting results on the classification of positive solutions of equation \eqref{elliptic} for general cases appeared.  We summarize the known results below.
%{\bf Please list all known results below 5-10-2019}

(i) For  $a=0, b<0,$  Hu \cite{Hu1994} showed that there is no positive classical solution for  subcritical case ($1\leq q<\frac{n}{n-2}$ and $n\geq 3$); For critical case $q=\frac{n}{n-2}$, Ou \cite{Ou1996} proved that all  positive classical solutions are fundamental solutions of the Laplace equation multiplied by proper constants.   The classical moving plane method (see, for example, \cite{GNN1981})  was used in these papers.
%and he also had a proof on the nonexistence for $1\leq q<\frac{n}{n-2}$ (and $n\geq 3$).

(ii) For  $a<0, b\in \mathbb{R},$  by introducing the method of moving spheres, Li and Zhu \cite{LZ1995} classified all  nonnegative classical solutions without the decay assumption at infinity for the critical case $p=\frac{n+2}{n-2}, q=\frac{n}{n-2}$ (and $n\geq 3$). See, also the paper by Chipot, Shafrir and Fila \cite{CSF1996} 

%(iii) When $a=0,  b>0,$ Lou and Zhu \cite{LZ1999} classified all the nonnegative classical solutions for $q>1$.

(iii) For  $a \ge 0,  b>0,$ Lou and Zhu \cite{LZ1999} classified all nonnegative classical solutions for  $q>1$ and $n\geq 2$ if $a=0, \ b>0$; and  for $p, \ q>1$ and $n \ge 2$  if $a, \ b>0$ by using the method of moving planes and the technics of lifting dimensions.

(iv) For $a<0, b<0,$ Li and Zhang \cite{LZ2003} proved that there is no positive classical solution for $0\leq p\leq\frac{n+2}{n-2}, -\infty< q\leq\frac{n}{n-2}, p+q< (\frac{n+2}{n-2})+(\frac{n}{n-2})$ (and $n\geq 3$) by using the
method of moving spheres.

\medskip

In this paper, we shall study the remaining case: $a>0, \ b<0$. After rescaling,
it suffices to consider the following problem.
\begin{equation}\label{elliptic-our}
\left\{
\aligned
& \Delta u  = n(n-2) u^p, \ \ u > 0 \ \ {in} \ \ \ H,\\
& \frac {\partial u}{\partial t} = -c_+u^{q} \ \ \ {on} \ \ \
\partial H
\endaligned
\right.
\end{equation}
for a positive constant $c_+$.

We first point out the specialty of this case: there might be more than two types of positive solutions.
%
%We find out that for $x=(x', t)\in H=\mathbb{R}_+^2$
%$$u(x',t)=\frac{1}{\sqrt{2}} [x'^2+(\frac{1}{2}+t)^2]^{-1/4} \quad \text{and} \quad u(x',t)= (\frac{3}{4})^{1/4} [ \frac{3}{2}+t]^{-1/2}$$
%are the solutions of the following elliptic equation :
%$$
%\left\{
%\aligned
%& \Delta u  = u^5, \ \ u > 0 \ \ {in} \ \ \ H= \{ (x', t) \ | \  x \in
%\Bbb{R}, \ t>0 \} ,\\
%& \frac {\partial u}{\partial t} = -u^{5} \ \ \ {on} \ \ \
%\partial H =\Bbb{R} .
%\endaligned
%\right.
%$$
%
In fact, one can check that for $x=(x', t)\in H$ and $n\geq 3$, both
$$u(x',t)= [x'^2+(d+t)^2]^{-\gamma} \quad \text{and} \quad u(x',t)= (dA^{-1}+A t)^{-2\gamma}$$
satisfy the following elliptic equation:
$$
\left\{
\aligned
& \Delta u  =2\gamma (2\gamma+2-n) u^p, \ \ u > 0 \ \ {in} \ \ \ H,\\
& \frac {\partial u}{\partial t} = - 2 d\gamma u^{p} \ \ \ {on} \ \ \
\partial H,
\endaligned
\right.
$$
where $ d>0, ~\gamma>\frac{n-2}{2}, A=[(2\gamma+2-n)/(2\gamma+1)]^{1/2}$ and $p=1+\frac{1}{\gamma},$ which implies $p<\frac{n}{n-2}.$

%
%In this  note,
%by using some technical lemmas developed
%in \cite{LZ1999} and the moving plane method  we shall classify all the positive classical
%solutions when $a>0, b<0, p\geq\frac{n+2}{n-2}, 1\leq q< \frac{n}{n-2}$ and $n\geq 3.$
%

\medskip

Our first result can be stated as follows.

\begin{theorem}\label{Class-noncritical}
If $\ u(x) \in C^2(H) \cap C^1(\bar H)$ solves \eqref{elliptic-our} and  $n\ge 3$, $ p\geq\frac{n+2}{n-2}, 1\leq q< \frac{n}{n-2}$, then $ u=
(\frac{p-1}{2}At+B)^{-2/{(p-1)}}$, where
 $A=(\frac{2n(n-2)}{p+1})^{1/2}$ and  $B= (c_+^{-1}A)^{-(p-1)/(2q-p-1)}.$
\end{theorem}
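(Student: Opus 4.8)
The plan is to split the proof into two independent steps: (i) show that every solution $u$ of \eqref{elliptic-our} in the stated range of exponents depends only on the last variable $t$; and (ii) solve the resulting ordinary differential equation. For (i) I would use the method of moving spheres with centers on $\partial H$; for (ii), an elementary energy argument.

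\emph{Step (i): reduction to one variable.} For $x_0\in\partial H$ and $\lambda>0$ let
$u_{x_0,\lambda}(x)=\bigl(\lambda/|x-x_0|\bigr)^{n-2}\,u\bigl(x_0+\lambda^2(x-x_0)/|x-x_0|^2\bigr)$
be the Kelvin transform with respect to $\partial B_\lambda(x_0)$; since $x_0\in\partial H$, the underlying inversion maps $H$ onto $H$ and fixes $\partial B_\lambda(x_0)$ pointwise. A direct computation gives, for $|x-x_0|>\lambda$,
$\Delta u_{x_0,\lambda}=n(n-2)\,\bigl(|x-x_0|/\lambda\bigr)^{(n-2)p-(n+2)}u_{x_0,\lambda}^{p}\ge n(n-2)u_{x_0,\lambda}^{p}$ in $H$ (the power is $\ge0$ exactly because $p\ge\frac{n+2}{n-2}$), while on $\partial H$
$\partial_t u_{x_0,\lambda}=-c_+\bigl(\lambda/|x-x_0|\bigr)^{n-(n-2)q}u_{x_0,\lambda}^{q}\ge -c_+u_{x_0,\lambda}^{q}$ (the power is $>0$ exactly because $q<\frac{n}{n-2}$). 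Writing the differences of $p$-th and $q$-th powers by the mean value theorem, one sees that $w:=u-u_{x_0,\lambda}$ (which vanishes on $\partial B_\lambda(x_0)$) satisfies $\Delta w-c(x)w\le0$ with $c\ge0$ in $H\cap\{|x-x_0|>\lambda\}$ and $\partial_\nu w-d(x)w\ge0$ with $d\ge0$ on $\partial H\cap\{|x-x_0|>\lambda\}$ — exactly the sign configuration for which the maximum principle and the Hopf lemma apply. One then checks $w\ge0$ for all small $\lambda>0$, increases $\lambda$, and observes that if the procedure stopped at some finite $\bar\lambda(x_0)$ one would get $w_{x_0,\bar\lambda}\equiv0$, i.e.\ $u\equiv u_{x_0,\bar\lambda}$; but that identity would force one of the two weights above to be identically $1$, impossible since $(p,q)\ne(\frac{n+2}{n-2},\frac{n}{n-2})$. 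Hence $u_{x_0,\lambda}\le u$ outside $B_\lambda(x_0)$ for all $x_0\in\partial H$ and all $\lambda>0$, and a standard calculus lemma (of the type used in \cite{LZ1995,LZ1999}) forces $u$ to be independent of $x'$; that is, $u=u(t)$.

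The step I expect to be genuinely delicate is the control of $u$ near infinity needed to run this scheme: initiating the moving spheres requires (roughly) that $u$ not decay faster than $|x|^{-(n-2)}$ at infinity, so that it dominates the fast-decaying $u_{x_0,\lambda}$ there, and the maximum principle has to be applied on the unbounded exterior regions $H\cap\{|x-x_0|>\lambda\}$. Since $u$ is only subharmonic, subharmonicity gives an upper but not a lower bound at infinity, so some a priori analysis — Harnack chains together with the structure of the boundary condition, or a blow-down/barrier argument — is needed, and this is where the restriction $p\ge\frac{n+2}{n-2}$, $1\le q<\frac{n}{n-2}$ is really used.

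\emph{Step (ii): the ODE.} With $u=u(t)$, \eqref{elliptic-our} becomes $u''=n(n-2)u^p$ on $(0,\infty)$, $u>0$, with $u'(0)=-c_+u(0)^q$. Since $u''>0$, $u'$ is strictly increasing; either $u'<0$ on all of $(0,\infty)$, or $u'(t_1)\ge0$ for some $t_1$, in which case $u$ is increasing and convex on $(t_1,\infty)$ and, multiplying the equation by $u'$ and integrating, one finds that $u$ blows up in finite time, contradicting $u\in C^2((0,\infty))$. Hence $u'<0$ everywhere, so $u'\uparrow0$ and $u\downarrow L\ge0$, and the equation forces $L=0$. Multiplying by $u'$ and integrating gives $u'^2=A^2u^{p+1}+C$ with $A^2=\tfrac{2n(n-2)}{p+1}$; letting $t\to\infty$ (so $u',u\to0$) gives $C=0$. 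Thus $u'=-Au^{(p+1)/2}$, which integrates to $u=\bigl(\tfrac{p-1}{2}At+B\bigr)^{-2/(p-1)}$ for some $B>0$; finally $u'(0)=-c_+u(0)^q$ gives $A/c_+=B^{(p+1-2q)/(p-1)}$, i.e.\ $B=(c_+^{-1}A)^{-(p-1)/(2q-p-1)}$. Since $p+1\ge\tfrac{2n}{n-2}>2q$ we have $2q-p-1<0$, so $B$ is a well-defined positive number, which completes the proof.
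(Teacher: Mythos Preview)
Your overall strategy is different from the paper's: you run moving spheres directly on $u$ in the exterior regions $H\cap\{|x-x_0|>\lambda\}$, whereas the paper first applies a single Kelvin transform $v(x)=|x|^{2-n}u(x/|x|^2)$ and then runs the classical moving \emph{planes} on $v$. The advantage of the paper's route is precisely the point you flag as ``genuinely delicate'': the Kelvin transform turns the behaviour of $u$ at infinity into the behaviour of $v$ near the single point $0$, and a short barrier argument (Lemma~\ref{singular}/Corollary~\ref{singular-cor1}) gives a uniform lower bound $v\ge\epsilon/2$ on $\overline{B_{r_0}^+}\setminus\{0\}$. Translated back, this is exactly the estimate $u(x)\ge c|x|^{2-n}$ for large $|x|$ that your scheme needs in order to start the spheres and to exclude negative infima drifting to infinity --- but which you do not supply. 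So the gap you identify is real, and the paper's device of transforming first is what closes it; ``Harnack chains'' will not help here since $u$ is subharmonic rather than a solution of an equation with a Harnack inequality.

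There is a second, more concrete gap in Step~(i). Your claim that the inequalities $\Delta w-c\,w\le0$ and $\partial_\nu w-d\,w\ge0$ (with $c,d\ge0$) are ``exactly the sign configuration for which the maximum principle and the Hopf lemma apply'' is not correct on $\partial H$: with $\nu$ the outward normal to $H$ and $d>0$, a negative boundary minimum $\bar x$ of $w$ gives $\partial_\nu w(\bar x)\le0$, while the boundary inequality only says $\partial_\nu w(\bar x)\ge d\,w(\bar x)<0$ --- no contradiction. The paper meets the same obstacle and overcomes it by passing to an auxiliary function $\varphi_\lambda=|z|^\beta w_\lambda$ (proof of Proposition~\ref{monotonicity}); this shifts the boundary coefficient to $c_4(x)=\beta|z|^{-2}-c_2(x)$, and one then shows $c_2$ is small at a putative negative minimum, forcing $c_4>0$ and hence a genuine contradiction. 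A direct moving-sphere version would require the analogous auxiliary construction (and a further corner argument like Lemma~\ref{normal-derivative-0} when continuing the process), not a bare Hopf lemma. Your Step~(ii), the ODE analysis, is correct and coincides with the paper's.
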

%{\bf We do not need $q>1$? 5-24 MZ}
%{\it $q>1$ shall be used in the proof of Lemma 2.6. Check details. 5-26-2019 MZ}

Apparently, the supercritical power assumption (that is $p\geq\frac{n+2}{n-2}$ ) is needed in Theorem \ref{Class-noncritical} due to the above mentioned example.

\smallskip

For critical case, we have

%\medskip
%Moreover,by using some technical lemmas developed
%in \cite{LZ1995} and the moving sphere method  we shall classify all the positive classical
%solutions of \eqref{elliptic} when $a>0, b<0, p=\frac{n+2}{n-2}, q= \frac{n}{n-2}$ and $n\geq 3.$
%For sake of convenience, we consider the following problem with the critical power.
%\begin{equation}\label{elliptic-our-critical}
%\left\{
%\aligned
%& \Delta u  =n(n-2) u^p, \ \ u > 0 \ \ {in} \ \ \ H=\Bbb{R}_+^n,\\
%& \frac {\partial u}{\partial t} = -c u^{q} \ \ \ {on} \ \ \
%\partial H.
%\endaligned
%\right.
%\end{equation}
%where $ p=\frac{n+2}{n-2},  q= \frac{n}{n-2}$ and $c\in\Bbb{R}^+.$
%

\begin{theorem}\label{Class-critical}
If $\ u(x) \in C^2(H) \cap C^1(\bar H)$ solves \eqref{elliptic-our} and $n\ge 3$, $ p=\frac{n+2}{n-2}, q=\frac{n}{n-2}$.
Then for $c_+\neq n-2$, $u(x', t)=\big(\frac{\varepsilon}{|(x', t)-(x'_0, t_0)|^2-\varepsilon ^2} \big)^{(n-2)/2}$; for $c_+=n-2$, either $u(x', t)=\big(\frac{\varepsilon}{|(x', t)-(x'_0, t_0)|^2-\varepsilon ^2} \big)^{(n-2)/2}$ or $u(x',t)=u(0,t)=(2t+u(0)^{-2/(n-2)})^{-(n-2)/2}$, where $0<\varepsilon<\big( (n-2)^{-1}c_+ \big)^{2/n}, \ x'_0\in
\Bbb{R}^{n-1},$ and $t_0= - c_+(n-2)^{-1}\varepsilon^{-(n-2)/2}.$
%Then either $
%u(x', t)=u(0, t)=u(t),$ i. e.
%$$
%u(t)=\left\{
%\aligned
%& -{\int_0^t}{ \sqrt{(n-2)^2u^{\frac{2n}{n-2}}(s)+(c_+^2-(n-2)^2)u^{\frac{2n}{n-2}}(0)}}ds+u(0), \ \ 0\leq t < t_0,\\
%&\big(1-(\frac{c_+}{n-2})^2 \big)^{\frac{n-2}{2n}}u(0), \ \ t = t_0,\\
%& {\int_{t_0}^t}{ \sqrt{(n-2)^2u^{\frac{2n}{n-2}}(s)+(c_+^2-(n-2)^2)u^{\frac{2n}{n-2}}(0)}}ds+u(t_0), \ \ t > t_0,
%\endaligned
%\right.
%$$
%when $0<c_+<n-2;$ $ u( t)=
%(2t+u(0)^{-2/(n-2)})^{-(n-2)/2}$ when $c_+=n-2;$ $ u( t)\equiv 0$ when $c_+>n-2.$  Or, $u(x', t)=\big(\frac{\varepsilon}{|(x', t)-(x'_0, t_0)|^2-\varepsilon ^2} \big)^{(n-2)/2}$ for all $c_+>0,\ 0<\varepsilon<\big( (n-2)^{-1}c_+ \big)^{2/n}, \ x'_0\in
%\Bbb{R}^{n-1},$ and $t_0= - c_+(n-2)^{-1}\varepsilon^{-(n-2)/2}.$
\end{theorem}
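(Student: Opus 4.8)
The plan is to reduce the critical problem to the classification result of Li--Zhu \cite{LZ1995} for the case $a<0$, $b\in\mathbb R$ by means of a Kelvin transform and the method of moving spheres, exactly as suggested by the keywords. First I would observe that the equation \eqref{elliptic-our} with $p=\frac{n+2}{n-2}$, $q=\frac{n}{n-2}$ is conformally invariant under Kelvin transforms centered at boundary points: for $y_0\in\partial H$ the function $u_{y_0,\lambda}(x):=\left(\frac{\lambda}{|x-y_0|}\right)^{n-2}u\!\left(y_0+\frac{\lambda^2(x-y_0)}{|x-y_0|^2}\right)$ solves the same system on $H$. The strategy of moving spheres is then to slide such spheres centered at boundary points and compare $u$ with $u_{y_0,\lambda}$. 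The outcome of this procedure is dichotomous: either there is some $y_0\in\partial H$ for which the spheres can be moved all the way (giving, after the standard calculus lemma on the equality case, the ``bubble'' solution of the stated form), or for every boundary point $y_0$ the critical radius is finite and one deduces, as in Li--Zhu, that $u$ depends only on the last variable $t$.

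Second, in the branch where $u=u(t)$, I would plug the ansatz into the ODE $u''=n(n-2)u^{\frac{n+2}{n-2}}$ on $t>0$ with Neumann-type data $u'(0)=-c_+u(0)^{\frac{n}{n-2}}$. Multiplying by $u'$ and integrating gives the first integral $\tfrac12 (u')^2=\tfrac{(n-2)^2}{2}u^{\frac{2n}{n-2}}+E$; positivity of $u$ together with the requirement $u\in C^1(\bar H)$ forces $E=0$ and $u'<0$, whence $u'=-(n-2)u^{\frac n{n-2}}$, which integrates to $u(t)=(2t+u(0)^{-2/(n-2)})^{-(n-2)/2}$. Evaluating the boundary condition at $t=0$ gives $u'(0)=-(n-2)u(0)^{\frac n{n-2}}$, so this $t$-dependent solution is compatible with the boundary condition \emph{only} when $c_+=n-2$; this is precisely where the special value $c_+=n-2$ enters. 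Conversely, for the bubble branch, the condition $0<\varepsilon<\big((n-2)^{-1}c_+\big)^{2/n}$ and the formula $t_0=-c_+(n-2)^{-1}\varepsilon^{-(n-2)/2}$ should drop out by demanding that $u$ be positive and $C^1$ up to $\partial H$ (i.e. that the pole $(x_0',t_0)$ lie strictly below the boundary, $t_0<0$, and that the singular sphere $|(x',t)-(x_0',t_0)|=\varepsilon$ not meet $\bar H$) and then imposing the nonlinear Neumann condition at $t=0$; a direct computation relates $c_+$, $\varepsilon$ and $t_0$ and yields the stated constraints.

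Third, I would need to make the moving-sphere argument rigorous without any a priori decay assumption on $u$ at infinity. The standard device (Li--Zhu, Li--Zhang) is: start the spheres near a boundary point using the strong maximum principle and Hopf lemma (here the sign $a>0$ and the boundary sign $-c_+<0$ must be checked to give the correct differential inequalities for the comparison function $w:=u-u_{y_0,\lambda}$, so that $w\ge0$ can be propagated); then either the process never stops — forcing, via the calculus lemma of Li--Zhu on functions invariant under a family of Kelvin transforms, the explicit form of $u$ — or it stops at a finite radius, in which case touching occurs and one upgrades to $u_{y_0,\lambda_0}\equiv u$, producing a symmetry that, as $y_0$ ranges over $\partial H$, forces $u$ to be a function of $t$ alone. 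The main obstacle I anticipate is precisely the analysis of the ``stopping'' alternative: one must rule out that the spheres get stuck because of bad behavior at infinity (the comparison function could fail to be nonnegative near $\infty$ rather than because of an interior or boundary touching point), and handle the interplay between the interior equation and the nonlinear boundary condition simultaneously — the two maximum principles (interior and boundary Hopf) must be combined carefully, and the borderline parameter $c_+=n-2$ is exactly the threshold at which both solution families coexist, so the argument must be sharp enough to detect it.
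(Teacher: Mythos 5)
The overall strategy --- moving spheres centered at boundary points, a dichotomy depending on whether the critical radius is zero or positive, and an ODE reduction in one branch --- is the one the paper follows, but you have attached the two branches of the dichotomy to the \emph{wrong} conclusions, and this is not a cosmetic slip. For $b\in\partial H$ let $\lambda_b$ be the infimum of $\lambda>0$ such that the sphere comparison $w_{\mu,b}\geq 0$ holds for all $\mu\geq\lambda$. What the paper establishes (Propositions \ref{lambda-notstop}, \ref{lambda-stop}, \ref{lambda-stop-cor}) is this: if $\lambda_b=0$ for every $b$ --- the spheres never stop shrinking --- then the first Li--Zhu calculus lemma (Lemma \ref{keylemma1}) forces $u=u(t)$, and the subsequent ODE analysis yields $c_+=n-2$ together with $u(t)=(2t+u(0)^{-2/(n-2)})^{-(n-2)/2}$; if instead $\lambda_b>0$ for some (hence, by Proposition \ref{lambda-stop-cor}, all) $b$ --- the spheres stop at a positive critical radius --- then exact Kelvin invariance $w_{\lambda_b,b}\equiv 0$ holds there, the second Li--Zhu calculus lemma (Lemma \ref{keylemma2}) determines the boundary trace of $u$ to be a bubble, and a further Kelvin transform to a ball together with \cite{GNN1979} yields the full bubble. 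Your proposal asserts the opposite: ``spheres moved all the way'' $\Rightarrow$ bubble (invoking ``the calculus lemma on the equality case''), and ``critical radius finite for every $y_0$'' $\Rightarrow$ $u=u(t)$. That assignment is also internally inconsistent --- the equality case $w_{\lambda_b,b}\equiv 0$ is the \emph{stopping} alternative, not the ``moves all the way'' one --- and it applies each Li--Zhu lemma in exactly the wrong branch, so the argument will not close as written.

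Two smaller points. Your ODE discussion reaches the right conclusion, but ``positivity plus $C^1(\bar H)$ forces the energy constant to vanish'' is not quite the argument: one must separately exclude $c_+>n-2$ (the first integral keeps $u'$ below a strictly negative constant, so $u$ reaches zero in finite time) and $0<c_+<n-2$ (the solution must turn around and one then shows $u^{-2/(n-2)}$ decreases linearly, giving finite-time blow-up); the paper carries out both. In the bubble branch you also omit the step that actually upgrades the boundary trace to the interior formula: after obtaining $u(x',0)=\big(a/(|x'-x_0'|^2+d)\big)^{(n-2)/2}$ (Lemma \ref{boundary-solution}), one performs a Kelvin transform mapping $H$ onto a ball $B$, observes that the transformed function equals $1$ on $\partial B$, solves the resulting interior Dirichlet problem, and invokes the radial symmetry result of \cite{GNN1979}; ``demanding $u>0$ and $C^1$ up to the boundary'' by itself does not yield the stated explicit form.
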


\smallskip

If  both $ p>\frac{n+2}{n-2}$ and $q> \frac{n}{n-2}$, solutions of \eqref{elliptic-our} are complicated.  We have the following observation.

(1) If $2q\neq p+1$, then $u=(\frac{p-1}{2}At+B)^{-2/(p-1)}$ in Theorem \ref{Class-noncritical} is also a positive solution to \eqref{elliptic-our}. But there may be other solutions.

(2) If $2q=p+1$ and there is a $m\in\mathbb{N}$ satisfies $3\leq m<n, p=\frac{m+2}{m-2}$, and $ q=\frac{m}{m-2}$, then by Theorem \ref{Class-critical}, we know that there is a solution $v=v(x_1,\cdots, x_{m-1},t)$ solves
\begin{eqnarray*}
	\begin{cases}
		\Delta v=m(m-2)v^p, v>0 &\text{ in } H \ ,\\
		\frac{\partial v}{\partial t}=-cv^q & \text{ on } \ \partial H.
	\end{cases}
\end{eqnarray*}
Then $u(x_1,\cdots,x_{m-1},\cdots, x_{n-1},t)=v(x_1,\cdots, x_{m-1},t)$ satisfies equation \eqref{elliptic-our} after suitable scaling. But there may be  other solutions.

%{\bf Can we say this: Similar situation happens to the case $1\leq p=q<\frac{n}{n-2}$ as well. 5-24MZ}

The solution set is more complicated in the case $1\leq p=q<\frac{n}{n-2}$.

\medskip

Theorem \ref{Class-noncritical} will be proved by applying  the standard moving plane method  for the equation after Kelvin transformation. The noncritical power yields  extra weight functions in the equation and/or on the boundary condition. In return, we can prove that all positive solutions only depend on the variable $t.$

However, the method of moving planes is not suitable for the critical case, basically due to the fact that the reflection plane may stop at any position. Instead,  we will use  the method of moving spheres, introduced by Li and Zhu in \cite{LZ1995}. Here, the major difference between our current work and other previous papers using the method of moving spheres is that both scenarios can happen: (1) all reflective spheres never stop, this leads to the result that the solution only depends on $t$; (2) all reflective spheres stop at a critical position, this leads to the conclusion that the solution is radially symmetric with respect to a point.

%classify the positive solutions of \eqref{elliptic-our-critical} with critical power by  It is interesting that two different types of the positive solutions of \eqref{elliptic-our-critical} are obtained.
%

\medskip

Our paper is organized as follows: We will prove Theorem \ref{Class-noncritical}
by using the method of moving planes in Section 2.
The proof of Theorem \ref{Class-critical} will be presented in Section 3.  In this section, we first consider the case that all reflective spheres do not stop (Proposition \ref{lambda-notstop} below), which leads to the solutions that only depend on the last variable;
then we consider the case that  all reflective spheres do stop at certain points, which leads to the  radially symmetric solutions.
% {it the second ingredient } {\bf do not understand at this moment. 5-24 MZ}
%to complete the proof of Theorem \ref{Class-critical}.

\section{Noncritical results\label{Section 2}}

In this section, we shall prove Theorem \ref{Class-noncritical}. For the sake of convenience, after rescaling we can consider the following problem:
\begin{equation}\label{elliptic-our-1}
\left\{
\aligned
& \Delta u  =  u^p, \ \ u > 0 \ \ {in} \ \ \ H,\\
& \frac {\partial u}{\partial t} = - u^{q} \ \ \ {on} \ \ \
\partial H.
\endaligned
\right.
\end{equation}
And throughout this section, we always assume that $n\ge 3$, $p\ge \frac{n+2}{n-2}$ and $1\le q <\frac n{n-2}. $

Since there is no assumption on the decay rate of $u(x)$
at infinity, as usual
we perform the Kelvin transformation on $u$, that is,  set
$$
v(x) = \frac 1{|x|^{n-2}} u\Big( \frac x {|x|^2}\Big).
$$
Then $ v(x)$ satisfies
\begin{equation}\label{kelvin}
\left\{
\aligned
 & \Delta v =|x|^\tau v^p, \ \ v(x)>0 \ \ \text{ in} \ \ H, \\
& \frac {\partial v}{\partial t}  = -\frac{1}{{|x|^{\alpha}}} v^q \ \ \ \text{ on} \ \
\partial H \setminus \{0\},
\endaligned
\right.
\end{equation}
where $\tau=p(n-2)-(n+2)>0$ and $ \alpha = n- q(n-2) >0$.
For $R>0,$ denote
$$
\aligned
B_R(x_0)&=\{x\in\mathbb{R}^n\,|\,\, |x-x_0|<R, x_0\in\mathbb{R}^n\},\\
B_R^+(x_0)&=\{x \,|\,\,|x-x_0|<R, x\in H, x_0\in H\}.
\endaligned
$$
When $x_0=0$, we simply denote
$B_R=B_R(0),\ B_R^+=B_R^+(0).$

%{\bf here 5-18}.
Our goal is to obtain
some  symmetric
properties for $ v(x)$. We shall achieve this goal
by using the method of moving planes. %which are parallel to$t$-axis.

Our first lemma, which  is
a modification of Lemma 2.1 in \cite{LZ1995} and Lemma 2.2 in \cite{LZ1999}, will be used to handle the possible
singular point.

\medskip
\begin{lemma}\label{singular}
Let   $v$
 $\in C^2( H )\cap
C^1( \bar H)\setminus \{0\}$ satisfy \eqref{kelvin}. Then for any $ 0< \epsilon <{\min\{ 1,
 \min_{\partial B_1^{+}\cap \partial B_1} v\}},$ we have $
v(x)\ge\frac {\epsilon}{2} $ for all  $x \in
\overline{ B_1^{+}} \setminus
\{0\}$.
\end{lemma}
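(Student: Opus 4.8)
The plan is to show that $v$ is bounded below near the singular point $0$ by building a barrier out of the fundamental solution of the Laplacian and exploiting the sign of the boundary flux. The key structural facts are: $v>0$ is superharmonic on $B_1^+$ up to the possible singularity at $0$ (since $\Delta v = |x|^\tau v^p \ge 0$), and on the flat part of the boundary $\partial H \cap B_1$ the normal derivative $\partial v/\partial t = -|x|^{-\alpha} v^q \le 0$, i.e. $v$ has the "right" sign of boundary flux for a minimum-principle argument, except that we must be careful near $0$ where the coefficient $|x|^{-\alpha}$ blows up.

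First I would fix $\epsilon$ as in the statement and consider the auxiliary function $w(x) = v(x) - \frac{\epsilon}{2} - \delta(|x|^{2-n} - 1)$ on the region $B_1^+ \setminus \overline{B_r^+}$ for small $r>0$ and small $\delta>0$ to be chosen, using that $|x|^{2-n}$ is harmonic away from $0$. On the outer spherical cap $\partial B_1^+ \cap \partial B_1$ we have $w = v - \epsilon/2 \ge \min_{\partial B_1^+ \cap \partial B_1} v - \epsilon/2 > 0$ by the choice of $\epsilon$; on the inner cap $\partial B_r^+ \cap \partial B_r$ we have $w \ge 0 - \epsilon/2 - \delta(r^{2-n}-1)$, which we cannot make nonnegative directly — instead I would first send $\delta \to 0$ after the maximum principle is applied on a fixed region, or alternatively argue that $v(x)\to +\infty$ is not needed: the point is that $\delta(|x|^{2-n}-1)$ is a subsolution we subtract, so we want $w \ge 0$ on the parabolic-type boundary and $\Delta w \le 0$ inside, giving $w\ge 0$ inside by the minimum principle. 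The inner-boundary issue is handled by choosing $\delta = \delta(r)$ large enough that $\delta(r^{2-n}-1) \ge \sup_{B_1^+} v$ fails — so instead one keeps $\delta$ fixed and lets $r\to 0$: since $v$ is superharmonic and positive near $0$, either $v$ is bounded near $0$, in which case $\delta(|x|^{2-n}-1)\to\infty$ dominates and $w\ge$ (bounded) $-\infty$...

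Let me restructure: the cleaner route, following Lemma 2.1 of \cite{LZ1995}, is to note that by superharmonicity and positivity, $\liminf_{x\to 0} v(x) =: \ell \in [0,+\infty]$, and if $\ell = +\infty$ the conclusion is trivial, so assume $\ell<\infty$; then $v + \delta|x|^{2-n}$ is superharmonic with $\liminf$ at $0$ equal to $+\infty$, so the removable-singularity / minimum-principle argument on $B_1^+$ with boundary data controlled on the spherical cap gives, after letting $\delta\to 0^+$, that $v(x) \ge \min_{\partial B_1^+\cap\partial B_1} v \cdot h(x)$ for a suitable harmonic comparison function $h$; combined with the Hopf-type boundary condition $\partial v/\partial t \le 0$ on $\partial H\cap B_1$ (which prevents the minimum from being attained on the flat boundary away from $0$), one concludes $v \ge \epsilon/2$ on $\overline{B_1^+}\setminus\{0\}$. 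The main obstacle, and the place requiring genuine care rather than routine checking, is handling the flat boundary piece: the boundary condition has the singular weight $|x|^{-\alpha}$ with $\alpha>0$, so the usual Hopf lemma argument must be localized away from $0$, and one must verify that no interior or flat-boundary minimum below $\epsilon/2$ can occur — this is exactly where the cited modifications of \cite{LZ1995, LZ1999} enter, and I would follow their device of comparing with $\epsilon(|x|^{2-n}\!-\!\text{const})/(\text{const})$-type barriers that are flat-boundary-compatible (their $t$-derivative vanishes on $\partial H$ or has the correct sign), then pass to the limit in the barrier parameter.
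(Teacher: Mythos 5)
Your proposal rests on a sign error that cannot be fixed by tweaking the barrier: you repeatedly treat $v$ as superharmonic, but equation \eqref{kelvin} gives $\Delta v = |x|^\tau v^p \ge 0$, so $v$ is \emph{sub}harmonic on $B_1^+$. (This is the opposite of the $a<0$ setting of Lemma 2.1 in \cite{LZ1995}, which you cite; there the sign is right and the device of adding $\delta|x|^{2-n}$ and letting $\delta\to 0^+$ does work, but that is exactly the feature that fails here.) Consequently, for your auxiliary function $w = v - \frac\epsilon2 - \delta(|x|^{2-n}-1)$ one has $\Delta w = |x|^\tau v^p \ge 0$, and the minimum principle you invoke is simply unavailable: a subharmonic function attains its maximum, not its minimum, on the boundary. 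No harmonic correction term can change the sign of $\Delta v$, so the straightforward comparison argument does not close.

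The paper resolves this with two devices absent from your write-up. First, the barrier $\varphi(x) = \frac\epsilon2 - \frac{r^{n-2}\epsilon}{|x|^{n-2}} + \frac{\epsilon t^2}2$ carries the extra term $\frac{\epsilon t^2}2$, whose Laplacian is $\epsilon$, so that $\Delta F_1 = |x|^\tau v^p - \epsilon$ — which is $\le 0$ precisely on $S^c = \{|x|^\tau v^p \le \epsilon\}$. Second, on the complementary set $S = \{|x|^\tau v^p > \epsilon\}$ the PDE is bypassed entirely: since $|x|\le 1$ and $\tau\ge 0$, $v^p > \epsilon/|x|^\tau \ge \epsilon$, hence (using $\epsilon<1$, $p>1$) $v > \epsilon \ge \varphi$, so $F_1 > 0$ pointwise. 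The minimum principle is applied only in $S^c$, where $\Delta F_1 \le 0$ holds, and then the negative minimum is pushed to the flat boundary, where $\partial_t\varphi = \epsilon t$ vanishes and the boundary condition gives $\partial_t F_1 = -|x|^{-\alpha}v^q < 0$, contradicting $\partial_t F_1 \ge 0$ at a flat-boundary minimum. Note also that the inner-cap coefficient is $r^{n-2}\epsilon$ (not a fixed $\delta$): on $|x|=r$ this makes $\varphi \le -\epsilon/2 + \epsilon t^2/2 < 0$, so $F_1 \ge v > 0$ there with no auxiliary limit, and $r^{n-2}\epsilon|x|^{2-n}\to 0$ pointwise as $r\to 0$, giving $v\ge\epsilon/2$ in the limit. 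You correctly anticipated the need for a barrier with vanishing $t$-derivative on $\partial H$, but without the $S$/$S^c$ split and the $t^2$ term the argument does not go through.
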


\smallskip

\begin{proof}\ For $0<r<1$, we introduce
 an auxiliary function
\begin{equation}\label{lemma-2.1-0}
\varphi (x) = \frac {\epsilon}{2} - \frac {r^{n-2}
\epsilon}{|x|^{n-2}} + \frac
{\epsilon t^2 } 2,\ \ \ \  x\in {B}_1^{+} \setminus B_r^{+},
\end{equation}
and let
  $ F_1(x)= v(x)- \varphi (x)$. Then we have
\begin{equation}\label{lemma-2.1-1}
  \left\{
  \aligned
   & \Delta F_1 = |x|^{\tau}v^p - \epsilon \ \ \ \text{in} \ \
   B_{1}^{+} \setminus
   {B}_r^{+},
   \\
    &\frac {\partial F_1}{ \partial t} =- \frac{1}{|x|^{\alpha}} v^q  \ \ \  \text{on} \ \ \partial
    (\overline{{B}_{1}^{+}} \setminus \overline{ B_r^{+}})
    \cap \partial \Bbb {R}_{+}^n.
    \endaligned
    \right.
\end{equation}
    We want to show that
    \begin{equation}\label{lemma-2.1-2}
    F_1 \ge 0 \ \ \ \text{in}\ \
    \overline{{B}_{1}^{+}}\setminus \overline {B_r^{+}}.
   \end{equation}

Let
    $$
    \aligned
    & S= \{ x \ : \ |x|^{\tau} v^p - \epsilon >0 \} \cap ( B_{1}
    \setminus B_r), \\
    & S^c=\{ x \ : \ |x|^{\tau} v^p - \epsilon \le 0 \} \cap ( B_{1}
    \setminus B_r).
    \endaligned
    $$
In $ S$, since $ 1 \ge |x|>0$,  we have
    $v^p(x)> {\epsilon/|x|^{\tau}} \geq \epsilon.$
    It follows that $ v(x) > \epsilon \ge \varphi_1 (x)$ for $ x \in S$,
    that is,  $ F_1 > 0$ in $S$;
    in $S^c$, $\Delta F_1 \le 0$.
On $ \partial B_r^{+}\cap  \partial B_r$,
 $ F_1= v- ( \frac {\epsilon}{2}
 - \epsilon + \frac
{\epsilon }2t^2) \geq v> 0$; on $ \partial B_1^{+}
\cap  \partial B_1$, $ F_1= v- ( \frac
{\epsilon}{2} - r^{n-2} {\epsilon} + \frac {\epsilon
}2t^2) > v- \epsilon\geq 0$. Suppose
that \eqref{lemma-2.1-2} fails,  it follows
from the  Maximum Principle that  there exists some $ x_0=( x_0', 0)$
with $r<| x_0'|<1$ such that $F_1( x_0)=  \min_{
\overline {{B}_1^{+}} \setminus  \overline {B_r^{+}}}F_1  <0$.
Therefore $\frac {\partial F_1}{\partial t } ( x_0) \geq
0$,
which contradicts to the boundary condition in \eqref{lemma-2.1-1}.
We thus obtain  \eqref{lemma-2.1-2}.
Sending $ r \to 0$, we complete the proof of Lemma \ref{singular}.
\end{proof}

%the end of lemma 2.1

 \medskip
\begin{corollary}\label{singular-cor1}
(scaled version). \ \ Let   $v\in C^2( H )\cap
C^1( \bar H)\setminus \{0\}$
 solve \eqref{kelvin}. Then for all $0<\epsilon < {\min\{ {r_0}^{(\tau+2)/(1-p)},
 \min_{\partial B_{r_0}^{+}\cap \partial B_{r_0}} v\}},$ we have $
v(x) \geq \frac {\epsilon}{2} $ for all  $x \in
\overline{{B}_{r_0}^{+}}\setminus
\{0\}$.
\end{corollary}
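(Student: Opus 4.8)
The plan is to deduce Corollary \ref{singular-cor1} from Lemma \ref{singular} by a rescaling argument, exploiting the specific homogeneity structure of equation \eqref{kelvin}. Given $v$ solving \eqref{kelvin} on $H$, I would introduce the scaled function $w(y) = r_0^{\,2/(p-1)} v(r_0 y)$ for $y \in H$, and check that $w$ again solves an equation of the same form. Indeed, $\Delta w(y) = r_0^{\,2/(p-1)+2}\,|r_0 y|^\tau v(r_0 y)^p = r_0^{\,2/(p-1)+2+\tau}\,r_0^{-2p/(p-1)}|y|^\tau w(y)^p$; since $\tau = p(n-2)-(n+2)$ and the exponent $2/(p-1)$ is exactly chosen so the critical-scaling part balances, the power of $r_0$ collapses — one checks $2/(p-1) + 2 + \tau - 2p/(p-1) = 0$ using $\tau + 2 = p(n-2) - n$ and a short computation — so $\Delta w = |y|^\tau w^p$. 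The boundary term scales similarly: $\partial_t w(y) = r_0^{\,2/(p-1)+1}\,\partial_t v(r_0 y) = -r_0^{\,2/(p-1)+1}|r_0 y|^{-\alpha} v(r_0 y)^q = -r_0^{\,2/(p-1)+1-\alpha}\,r_0^{-2q/(p-1)}|y|^{-\alpha} w(y)^q$, and here the residual power of $r_0$ need \emph{not} vanish (it does not, since $q$ is subcritical), so one gets $\partial_t w = -\kappa\,|y|^{-\alpha} w^q$ for some positive constant $\kappa = \kappa(r_0,p,q)$. This is fine: the proof of Lemma \ref{singular} never used the value of the coefficient on the boundary — only its sign — since in the final contradiction one only needs $\partial_t F_1(x_0) = -\kappa|x_0|^{-\alpha} v^q < 0$, while the interior comparison used only the coefficient $1$ on $|x|^\tau v^p$, which is preserved.

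Concretely, I would apply Lemma \ref{singular} to $w$. The hypothesis of the lemma requires a threshold $0 < \epsilon' < \min\{1,\ \min_{\partial B_1^+ \cap \partial B_1} w\}$, and then yields $w(y) \ge \epsilon'/2$ on $\overline{B_1^+}\setminus\{0\}$. Translating back via $v(x) = r_0^{-2/(p-1)} w(x/r_0)$, this says $v(x) \ge \tfrac12 r_0^{-2/(p-1)}\epsilon'$ for all $x \in \overline{B_{r_0}^+}\setminus\{0\}$. Setting $\epsilon := r_0^{-2/(p-1)}\epsilon'$ (equivalently $\epsilon' = r_0^{\,2/(p-1)}\epsilon$), the constraint $\epsilon' < 1$ becomes $\epsilon < r_0^{-2/(p-1)} = r_0^{(\tau+2)/(1-p)}\cdot r_0^{(\tau+2)/(p-1) - 2/(p-1)}$ — so I should double-check the bookkeeping of exponents here; note $r_0^{(\tau+2)/(1-p)} = r_0^{-(\tau+2)/(p-1)}$, and since $\tau+2 = p(n-2)-n$ this is not literally $r_0^{-2/(p-1)}$ unless $\tau = 0$. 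This suggests the correct reading is that one should scale $w(y) = v(r_0 y)$ with \emph{no} prefactor, or choose the prefactor precisely so that the stated threshold $r_0^{(\tau+2)/(1-p)}$ emerges; I would reverse-engineer the scaling from the desired conclusion. Taking $w(y) = v(r_0 y)$ directly: then $\Delta w(y) = r_0^2 |r_0 y|^\tau v(r_0y)^p = r_0^{\tau+2}|y|^\tau w(y)^p$, so setting $\tilde w = r_0^{(\tau+2)/(p-1)} w$ gives $\Delta \tilde w = |y|^\tau \tilde w^p$ — and the factor $r_0^{(\tau+2)/(p-1)}$ is exactly what turns the threshold $\epsilon' < 1$ for $\tilde w$ into $\epsilon < r_0^{-(\tau+2)/(p-1)} = r_0^{(\tau+2)/(1-p)}$ for the original $v$. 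So the right scaling is $\tilde w(y) = r_0^{(\tau+2)/(p-1)} v(r_0 y)$.

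With that scaling fixed, the steps are: (1) verify $\tilde w \in C^2(H)\cap C^1(\bar H)\setminus\{0\}$ and solves \eqref{kelvin} with the coefficient $1$ on the interior term and \emph{some} negative coefficient $-\kappa$ on the boundary term (recording that $\kappa > 0$ is all that matters); (2) observe that the proof of Lemma \ref{singular} goes through verbatim with $-1$ replaced by $-\kappa$ in the boundary condition, since only the sign entered the Maximum Principle argument — alternatively, restate Lemma \ref{singular} for this mildly more general equation, or absorb $\kappa$ by a further trivial scaling $v \mapsto \kappa^{1/(q-1)} v$ on $\partial H$ if $q \ne 1$, handling $q=1$ separately; (3) apply the (possibly mildly generalized) Lemma \ref{singular} to $\tilde w$ with threshold $\epsilon'$ and get $\tilde w \ge \epsilon'/2$ on $\overline{B_1^+}\setminus\{0\}$; (4) substitute $y = x/r_0$ and $\epsilon' = r_0^{(\tau+2)/(p-1)}\epsilon$ to conclude $v(x) \ge \epsilon/2$ on $\overline{B_{r_0}^+}\setminus\{0\}$ under the stated constraint on $\epsilon$, also translating the second part of the threshold, $\min_{\partial B_1^+\cap\partial B_1}\tilde w = r_0^{(\tau+2)/(p-1)}\min_{\partial B_{r_0}^+\cap\partial B_{r_0}} v$, into the stated $\min_{\partial B_{r_0}^+\cap\partial B_{r_0}} v$. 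The main obstacle — really the only point needing care — is the exponent bookkeeping and making sure the nonlinear boundary coefficient $\kappa$ does not break the comparison argument; I expect this to be routine once one notes Lemma \ref{singular}'s proof is insensitive to positive rescaling of the boundary nonlinearity.
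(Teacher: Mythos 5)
Your argument is correct and is essentially the paper's: both rescale by $\bar v(x) = r_0^{(\tau+2)/(p-1)} v(r_0 x)$ and invoke Lemma \ref{singular}. Your added observation---that this rescaling preserves the interior equation exactly but changes the boundary coefficient from $1$ to some $\kappa(r_0,p,q)>0$, which is harmless because the proof of Lemma \ref{singular} only uses the sign of that coefficient in the final Hopf-type contradiction---is a real subtlety that the paper's one-line proof passes over silently.
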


 \medskip
\begin{proof}
For $r_0>0$, we obtain the result by applying Lemma \ref{singular}
to $ \bar v(x) = {r_0}^{(\tau+2)/(p-1)} v(r_0 x)$ in
$\overline{ B_1^{+}} \setminus
\{0\}$.
\end{proof}

\medskip

For $ \lambda <0$, we define
$$
\aligned
&\Sigma_{\lambda}  = \{x \ | \ t>0, \ x_1 > \lambda \}, \ T_{\lambda} = \{x \ | \ t \ge
0,\  x_1 = \lambda \}, \\
& \tilde \Sigma_{\lambda}  = \overline \Sigma_{\lambda} \setminus  \{0
\},\ \ x^{\lambda}
\  \mbox {as \ the \ reflection \ point \ of} \ x \ \mbox{ about} \ \ T_{\lambda}, \\
&v_{\lambda} (x)  = v( x^{\lambda}), \ w_{\lambda} =v(x) - v_{\lambda} (x).
\endaligned
$$
Then $ w_{\lambda}(x) $ satisfies
\begin{equation}\label{the-minus}
    \left \{
    \aligned
    & \Delta w_{\lambda} = |x|^{\tau} v^p- |x^\lambda|^{\tau} v_\lambda^p \le  c_1(x)w_{\lambda} \ \ \ \text{in} \ \ \Sigma_{\lambda}, \\
    & \frac {\partial w_{\lambda}}{ \partial t}=-\frac{1}{|x|^{\alpha}}v^q+\frac{1}{|x^\lambda|^{\alpha}}v_\lambda^q  \le - c_2(x)  w_{\lambda} \ \
    \text{on} \ \
    \partial H \cap \tilde \Sigma_{\lambda},
    \endaligned
    \right.
 \end{equation}
    where $ c_1(x)= p |x|^{\tau} \cdot \xi_1^{p-1}(x)>0$, $ c_2(x) = \frac{q}{|x^\lambda|^{\alpha}}
     \xi_2^{q-1}(x)>0$, $\xi_1 $ and  $\xi_2$ are two
    functions
    between $v_{\lambda}$ and $ v$.
 Now we are ready to use the  moving plane method.

\medskip

\begin{proposition}\label{monotonicity} There is a $N>0$, such that, for $\lambda<-N$,
  $ w_{\lambda}(x) \ge 0$
    for all $ x
    \in \tilde \Sigma_{\lambda}$.
\end{proposition}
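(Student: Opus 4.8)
The plan is to establish that moving the plane $T_\lambda$ works for all sufficiently negative $\lambda$, i.e., that $w_\lambda \ge 0$ on $\tilde\Sigma_\lambda$ once $\lambda < -N$. The key observation is that after the Kelvin transform, $v(x) = |x|^{2-n}u(x/|x|^2)$ decays like $|x|^{2-n}$ at infinity (since $u$ is bounded near the origin; more precisely $u$ is continuous on $\bar H$ hence bounded near $0$, so $v(x) \to 0$ as $|x| \to \infty$), and near the singular point $0$ we have the lower bound $v \ge \epsilon/2$ from Lemma \ref{singular} / Corollary \ref{singular-cor1}. So the reflected function $v_\lambda$, which for $\lambda$ very negative lives near the region $x_1 \approx 2\lambda$ far out in $\Sigma_\lambda$, is uniformly small, while $v$ near $0$ stays bounded below; the real content is to control things in the intermediate region.

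First I would record the decay of $v$: from $u \in C^1(\bar H)$ and the definition of the Kelvin transform, $v(x) = O(|x|^{2-n})$ and $|\nabla v(x)| = O(|x|^{1-n})$ as $|x| \to \infty$. Then I would set $w_\lambda = v - v_\lambda$ and use the differential inequalities \eqref{the-minus}: $\Delta w_\lambda \le c_1(x) w_\lambda$ in $\Sigma_\lambda$ and $\partial w_\lambda/\partial t \le -c_2(x) w_\lambda$ on $\partial H \cap \tilde\Sigma_\lambda$, where $c_1, c_2 > 0$. The standard strategy is a maximum-principle argument: if $w_\lambda$ is negative somewhere, then because $w_\lambda \to 0$ at infinity (both $v$ and $v_\lambda$ decay there, using that for $\lambda < 0$ reflection maps points in $\Sigma_\lambda$ away from infinity only in a controlled way — actually one checks $|x^\lambda| \ge |x|$ fails in general, so more care: for $x \in \Sigma_\lambda$ with $x_1 > \lambda < 0$, the reflected point $x^\lambda$ has $|x^\lambda|^2 = |x|^2 + 4\lambda(\lambda - x_1) \le |x|^2$ is also not automatic; instead use that on the bulk $v_\lambda$ is small because $v$ is small away from a neighborhood of $0$, and $x^\lambda$ avoids a fixed neighborhood of $0$ when $\lambda < -N$), the negative infimum is attained at an interior point of $\Sigma_\lambda$ or on the boundary portion $\partial H \cap \tilde\Sigma_\lambda$. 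An interior negative minimum contradicts $\Delta w_\lambda \le c_1 w_\lambda < 0$ there (after noting $c_1 w_\lambda \le 0$ at such a point, so $\Delta w_\lambda \le 0$, but a strict interior minimum of a superharmonic-type function forces a contradiction via the strong maximum principle applied to $\Delta w_\lambda - c_1^+ w_\lambda \le 0$); a negative minimum on the flat boundary contradicts the Hopf-type inequality $\partial w_\lambda/\partial t \le -c_2 w_\lambda$, since at such a boundary minimum $\partial w_\lambda/\partial t \ge 0$ while $-c_2 w_\lambda > 0$ would be needed — wait, that's consistent in sign, so one instead uses Hopf's lemma to get $\partial w_\lambda/\partial t < 0$ strictly, contradicting $-c_2 w_\lambda \ge 0$. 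To make the maximum principle actually applicable one multiplies by a suitable positive auxiliary function (e.g., dividing $w_\lambda$ by something like $|x|^{2-n}$ or a harmonic barrier positive on $\Sigma_\lambda$) to kill the bad sign of the zeroth-order coefficient, following Lemma 2.1 of \cite{LZ1995}.

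The main obstacle, which is why Lemma \ref{singular} was proved first, is the singular point $0 \in \partial\Sigma_\lambda$: one cannot simply apply the maximum principle on the unbounded domain $\Sigma_\lambda$ because $v$ blows up (or merely fails to be defined) at $0$, and $0$ lies on the closure. So I would excise a small half-ball $B_\delta^+$ around $0$ and work on $\Sigma_\lambda \setminus \overline{B_\delta^+}$; on $\partial B_\delta^+ \cap \Sigma_\lambda$ one needs a lower bound for $w_\lambda = v - v_\lambda$, and here $v \ge \epsilon/2$ near $0$ by Lemma \ref{singular}/Corollary \ref{singular-cor1} while $v_\lambda = v(x^\lambda)$ with $x^\lambda$ at distance roughly $2|\lambda|$ from $0$, hence $v_\lambda \le C|\lambda|^{2-n}$ is tiny; so for $\lambda < -N$ with $N$ large, $w_\lambda > 0$ on that inner boundary. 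Combined with $w_\lambda \to 0$ at infinity and the differential inequalities, the maximum principle then yields $w_\lambda \ge 0$ on all of $\tilde\Sigma_\lambda$. The delicate bookkeeping — choosing $\delta$ and $N$ compatibly, and verifying that $x^\lambda$ stays in the region where $v$ is small (i.e., $|x^\lambda| \ge$ some fixed $r_0$ for all $x \in \Sigma_\lambda \setminus \overline{B_\delta^+}$ when $\lambda < -N$) — is the part that requires genuine care, but it is routine once the decay of $v$ and the lower bound from Lemma \ref{singular} are in hand.
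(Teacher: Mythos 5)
Your outline captures the broad architecture (decay of $v$ at infinity, the lower bound near $0$ from Lemma~\ref{singular}, excising a half-ball, and then a maximum-principle argument), but it misses the step that actually carries the weight and proposes a fix for the boundary case that does not work.

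The difficulty is entirely on $\partial H$. Suppose $w_\lambda$ attains a negative minimum at $\bar x\in\partial H$. The first-order condition there is $\partial w_\lambda/\partial t(\bar x)\ge 0$ (inner normal), and Hopf's lemma would only sharpen this to $\partial w_\lambda/\partial t(\bar x)>0$ --- not $<0$ as you claim. That strict positivity is perfectly consistent with the boundary inequality $\partial w_\lambda/\partial t \le -c_2 w_\lambda$, since $-c_2(\bar x)w_\lambda(\bar x)>0$ when $w_\lambda(\bar x)<0$. So the boundary condition as written gives no contradiction, with or without Hopf; the zeroth-order coefficient on the boundary has the \emph{wrong} sign, and Hopf's lemma cannot repair this. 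Your aside about "dividing by a suitable positive auxiliary function to kill the bad sign of the zeroth-order coefficient" is aimed at the interior term $c_1$, which is not actually the problem (at a negative interior minimum, $\Delta w_\lambda\le c_1 w_\lambda<0$ is already favorable).

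What the paper does, and what your argument is missing, is to weight $w_\lambda$ by $|z|^\beta$ with $z=x+(0,\dots,0,1)$ and $\beta\in(0,n-2)$, obtaining $\varphi_\lambda=|z|^\beta w_\lambda$. This produces a \emph{boundary} inequality $\partial\varphi_\lambda/\partial t\le c_4(x)\varphi_\lambda$ with $c_4=\beta/|z|^2 - c_2$, i.e., the weight injects the positive term $\beta/|z|^2$ on the boundary. One then has to show $c_4(\bar x)>0$ at the minimizing point, which requires estimating $c_2(\bar x)$: since $\varphi_\lambda(\bar x)<0$ forces $v(\bar x)<\xi_2(\bar x)<v_\lambda(\bar x)$, and $|\bar x^\lambda|\gtrsim|\lambda|$, the Kelvin decay gives $v_\lambda(\bar x)\le(u(0)+o(1))/|\lambda|^{n-2}$, whence $c_2(\bar x)\to 0$ as $\lambda\to-\infty$. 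Only then does $c_4(\bar x)>0$, and only then does the boundary minimum yield a contradiction. Without this weight-plus-estimate combination, the proof has a hole exactly where you invoke Hopf.
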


    \medskip
\begin{proof}\
Write $z=x+(0,...,0,1).$  For a fixed positive parameter $\beta\in(0, n-2)$, we define
    $$
   \varphi_\lambda (x)=|(x_1,..., x_{n-1}, t+1)|^\beta
   w_{\lambda}(x)=|z|^\beta
   w_{\lambda}(x).   $$
   It is sufficient to prove the proposition for  $\varphi_\lambda (x) $.

   A direct calculation shows that
\begin{equation}\label{proposition 2.2-1}
     -\triangle\varphi_\lambda +\frac{2\beta}{|z|^2} z\cdot \nabla\varphi_\lambda  + c_3(x)\varphi_\lambda \geq 0 , \quad \text{in}~\Sigma_{\lambda},
 \end{equation}
where  $c_3(x)=\frac{\beta(n-2-\beta)}{|z|^2}+ c_1(x)>0;$   And
\begin{equation}\label{proposition 2.2-2}
        \frac{\partial\varphi_\lambda}{\partial t} =\frac{\beta}{|z|^2}\varphi_\lambda+|z|^\beta \frac{\partial w_\lambda}{\partial t}
    \leq
   c_4(x) \varphi_\lambda,  \ \ \quad \text{on} \ \ \partial H \cap \tilde \Sigma_{\lambda}\end{equation}
where
\begin{equation}\label{proposition 2.2-3}
        c_4(x)=\frac{\beta}{|z|^2}-c_2(x) =\frac{\beta}{|z|^2}-\frac{q}{|x^\lambda|^{\alpha}}\xi_2^{q-1}(x).
    \end{equation}

       Suppose that $\inf_{\Sigma_\lambda} \varphi_\lambda (x)<0$ for $\lambda$ sufficiently negative. First we observe that
         $$ \varphi_\lambda (x) \to 0 \quad \text{as} \quad  |x| \to \infty.$$
         And by  Lemma \ref{singular},
    we know for some $ |\lambda|$ large enough,  $
    w_{\lambda} ( x) \geq \frac{\epsilon}{2}>0$ as $x \in \overline{{B}_1^{+}(0)} \setminus \{0\}.$
   Hence, there exists  $\bar x$ such that  $
    \varphi_{\lambda} (\bar x)
    = {\displaystyle
    \min_{x
    \in  \tilde \Sigma_{\lambda}} \varphi_{\lambda}(x)} <0$.   We know from the Maximum Principle that $\bar x   \in  \partial H \cap \tilde \Sigma_{\lambda}.$

    Since    $\varphi_\lambda(\bar x)< 0$, we have
    \[
 0<v(\bar x)<\xi_2(\bar x)<v_\lambda(\bar x)= \frac 1{|\tilde x^\lambda|^{n-2}} u\Big( \frac {{\bar x} ^\lambda} {|\bar x^\lambda|^2}\Big)
< \frac{A+\circ (1)}{|\lambda|^{n-2}},
\]
where $A=\lim _{|x|\rightarrow 0}u(x)=u(0).$ It follows that
\[
 0<c_2(\bar {x})<
 \frac{q(A+\circ (1))^{q-1}}{|\lambda|^{(n-2)(q-1)+\alpha}}\rightarrow 0 \ \ \text{as} \ \lambda\rightarrow -\infty~.
\]
Thus  $ c_4(\bar x) >0$ .
        But this leads to a
    contradiction to
    the boundary condition in  \eqref{proposition 2.2-2}.
    \end{proof}
 \medskip

  We   then  can define
    \begin{equation}\label{superior}
    \lambda_0 = \sup \{ \lambda <0 \ | \ w_{\mu} (x) \ge 0 \ \ \text{in} \ \tilde
    \Sigma_{\mu} \
    \text{for ~ all} \ \ -\infty <\mu <\lambda \}.
    \end{equation}

  \medskip

\begin{proposition}\label{lamubda=0}
 $ \lambda_0 = 0$
\end{proposition}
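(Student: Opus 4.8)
The plan is to argue by contradiction, assuming $\lambda_0<0$. By continuity of $w_\lambda$ in $\lambda$ and the definition \eqref{superior}, we will have $w_{\lambda_0}(x)\ge 0$ throughout $\tilde\Sigma_{\lambda_0}$. The strategy splits into two parts: first a strong-maximum-principle argument to upgrade this to strict positivity $w_{\lambda_0}>0$ in the interior and a Hopf-type statement on the boundary piece, and second a contradiction-by-openness argument showing that the inequality $w_\lambda\ge 0$ persists for $\lambda$ slightly larger than $\lambda_0$, contradicting the supremum.

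First I would work with the weighted function $\varphi_{\lambda_0}=|z|^\beta w_{\lambda_0}$ from Proposition \ref{monotonicity}, which satisfies \eqref{proposition 2.2-1}--\eqref{proposition 2.2-3}. Since $\varphi_{\lambda_0}\ge 0$ and is not identically zero (it is positive on $\overline{B_1^+}\setminus\{0\}$ for $|\lambda_0|$ not too small, by Lemma \ref{singular}; more carefully one notes $w_{\lambda_0}$ cannot vanish identically because $v$ is not symmetric about $T_{\lambda_0}$ for $\lambda_0<0$ thanks to the singularity/decay structure at $0$), the strong maximum principle applied to \eqref{proposition 2.2-1} forces $\varphi_{\lambda_0}>0$ in the open region $\Sigma_{\lambda_0}$, hence $w_{\lambda_0}>0$ there. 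On $\partial H\cap\tilde\Sigma_{\lambda_0}$, at any point where $w_{\lambda_0}=0$ one would derive a contradiction from the Hopf lemma combined with the boundary inequality \eqref{proposition 2.2-2}: interior positivity gives $\partial\varphi_{\lambda_0}/\partial t<0$ strictly at such a boundary zero, while \eqref{proposition 2.2-2} gives $\partial\varphi_{\lambda_0}/\partial t\le c_4\varphi_{\lambda_0}=0$ there with the wrong sign unless more care is taken — so in fact one concludes $w_{\lambda_0}>0$ on all of $\tilde\Sigma_{\lambda_0}$, or at worst controls its boundary zeros.

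Next, for the openness step, I would fix a large ball $B_R^+$ and use that on the compact set $\overline{B_R^+}\cap\tilde\Sigma_{\lambda_0}$ we have $w_{\lambda_0}\ge\delta>0$ away from $T_{\lambda_0}$, so by continuity $w_\lambda\ge 0$ on this set for $\lambda\in(\lambda_0,\lambda_0+\epsilon_0)$; near $T_{\lambda_0}$ one uses that $w_\lambda=0$ on $T_\lambda$ together with the derivative estimate coming from Hopf. Outside $B_R^+$, the decay $\varphi_\lambda(x)\to 0$ as $|x|\to\infty$ (noted in the proof of Proposition \ref{monotonicity}) together with the maximum principle on the unbounded piece $\Sigma_\lambda\setminus B_R^+$ prevents a new negative minimum from appearing there, using that $c_3>0$ in the interior and that $c_4(x)>0$ for $x$ near infinity on the boundary (the same computation as in Proposition \ref{monotonicity}, since $c_2(x)\to 0$ as $|x|\to\infty$ because of the weight $|x|^{-\alpha}$ with $\alpha>0$). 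Assembling these, $w_\lambda\ge 0$ on all of $\tilde\Sigma_\lambda$ for $\lambda$ slightly above $\lambda_0$, contradicting \eqref{superior}; hence $\lambda_0=0$.

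The main obstacle I anticipate is the analysis near the boundary plane $T_{\lambda_0}\cap\partial H$ — the edge where the Dirichlet-type condition $w_\lambda=0$ on $T_\lambda$ meets the Neumann-type condition on $\partial H$ — and the singular point $0$; handling the corner requires a careful Hopf-lemma argument at boundary points (the sign of $c_4$ must be controlled, and one must ensure the interior strict positivity propagates to give a strict normal derivative), and handling the point $0$ is exactly what Lemma \ref{singular} and Corollary \ref{singular-cor1} are designed to absorb, so I would lean on those to keep $\varphi_\lambda$ bounded below near $0$ uniformly in $\lambda$ near $\lambda_0$. The interior and far-field parts are routine maximum-principle bookkeeping by comparison with Proposition \ref{monotonicity}.
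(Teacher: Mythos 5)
Your proposal is the same overall strategy as the paper's — assume $\lambda_0<0$, upgrade $w_{\lambda_0}\ge 0$ to strict positivity and a strict Hopf inequality via the strong maximum principle, rule out a negative minimum drifting to the singular point or to infinity, and then contradict the definition of the supremum — but it leaves two genuine gaps precisely at the two places where the paper has to work hardest.

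The first gap is the justification that $w_{\lambda_0}\not\equiv 0$. You gesture at "the singularity/decay structure at $0$," but the clean argument in the paper is direct: if $w_{\lambda_0}\equiv 0$, then the boundary condition in \eqref{the-minus} would force $v^q/|x|^\alpha = v_{\lambda_0}^q/|x^{\lambda_0}|^\alpha$ with $v=v_{\lambda_0}$, which since $\alpha=n-q(n-2)>0$ forces $|x|=|x^{\lambda_0}|$ for all $x$, impossible when $\lambda_0<0$. Your version of this step would need to be made precise; Lemma \ref{singular} alone does not do it (that lemma controls $v$, not $w_{\lambda_0}$; the relevant lower bound is Lemma \ref{singularity-2}, which estimates $w_{\lambda_0}$ itself).

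The more serious gap is the corner. You correctly identify that the contact point you extract is on the edge $T_{\lambda_0}\cap\partial H$, where the Dirichlet condition on $T_\lambda$ meets the oblique condition on $\partial H$, and you note that a "careful Hopf-lemma argument" is needed there — but you don't supply it, and the standard Hopf lemma does not apply at such a nonsmooth corner. This is exactly what the paper's Lemma \ref{normal-derivative-0} does: it constructs an explicit barrier $h_1(x)=\varepsilon(x_1^{-2}-1)(\mu t^2/2+1-\mu)$, compares $w_{\lambda_0}$ against it on a small box adjacent to $T_{\lambda_0}\cap\partial H$, and deduces $\partial w_{\lambda_0}/\partial x_1>0$ at the corner. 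Without this lemma your contradiction does not close: the sequence of negative minima $x^k$ can (and, after passing to the limit, must) converge to a corner point, where neither the interior Hopf lemma nor the boundary Hopf argument on $\partial H$ gives you the strict derivative you need. A secondary issue is the uniform bound on $|x^k|$ as $\lambda\searrow\lambda_0$: the heuristic "$\varphi_\lambda\to 0$ at infinity plus maximum principle" needs a comparison function (the paper's $\phi$ in Lemma \ref{neg-min-point}) to turn into a genuine $\lambda$-uniform bound; the sign of $c_4$ at infinity is not automatically good for all $\lambda\in(\lambda_0,\lambda_0+\delta)$ without such a device.

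In short: your outline is the correct architecture and matches the paper's, but the corner barrier construction (Lemma \ref{normal-derivative-0}) is the crux of the proposition, and "I anticipate this will need care" is not a substitute for proving it.
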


\medskip

\begin{proof}\
    Suppose for the contrary that  $ \lambda_0<0,$ then we claim that
\begin{equation}\label{proposition 2.4-1}
    w_{\lambda_0} \equiv 0.
\end{equation}
This is a contradiction to the boundary condition in \eqref{the-minus} since $\alpha>0.$ Therefore it suffices to prove \eqref{proposition 2.4-1} under the assumption $\lambda_0 <0.$

%{\bf here now 5-24-2019 MZ}

Suppose that \eqref{proposition 2.4-1} is false, then  $w_{\lambda_0} \not \equiv 0$ satisfies
\begin{equation}\label{proposition 2.4-1-2}
\left\{
\aligned
 - \Delta w_{\lambda_0} + c_1(x)  w_{\lambda_0} \geq 0  \ \ & \text{ in} \ \ \Sigma_{\lambda_0} , \\
\frac {\partial w_{\lambda_0}}{\partial t}  \leq - c_2(x)  w_{\lambda_0}\ \ \ \ \ \ & \text{ on} \ \
\partial H \cap \tilde \Sigma_{\lambda_0} ,\\
w_{\lambda_0}\geq 0 \ \ \  \ \ \ \ \ \ \ \ \ \  \ \ \ \ \ &\text{ in} \ \ \tilde \Sigma_{\lambda_0} .
\endaligned
\right.
\end{equation}
where $ c_1(x),  c_2(x) >0$ are the same as that in \eqref{the-minus}. It follows from the Strong Maximum Principle and Hopf lemma that
 \begin{equation}\label{proposition 2.4-1-3}
\left\{
\aligned
w_{\lambda_0}(x) > 0 , \ \   \ \ \ \ \ & x\in \tilde \Sigma_{\lambda_0} \setminus T_{\lambda_0} , \\
\frac {\partial w_{\lambda_0}}{\partial x_1} (x) >0,\ \ \ \ \ \ & x\in T_{\lambda_0}\cap H .
\endaligned
\right.
\end{equation}

    The  following lemma is
    needed to deal with the possible singular point.
    %and we postpone its proof till the end.

\medskip

\begin{lemma}\label{singularity-2}
For $ r_0\leq\min\{\frac 12 |\lambda_0|, 1 \},$   there
exists some positive constant $\gamma$ depending only on $\lambda_0$ and $ r_0$
     such that $ w_{\lambda_0}(x) >
    \gamma$ in $ B_{r_0}^+ \setminus \{ 0\}$.
\end{lemma}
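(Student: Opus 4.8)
The plan is to combine the strict positivity of $w_{\lambda_0}$ away from $T_{\lambda_0}$ (which (\ref{proposition 2.4-1-3}) already gives us on any compact set away from the origin) with a barrier argument near the singular point $0$, exactly in the spirit of Lemma \ref{singular} and Corollary \ref{singular-cor1}. The only region where positivity of $w_{\lambda_0}$ is not yet quantitative is a half-ball around $0$; since $r_0 \le \frac12|\lambda_0|$, the reflected point $x^{\lambda_0}$ of any $x \in B_{r_0}^+$ stays a definite distance $\ge |\lambda_0| - r_0 \ge r_0$ from the origin, so $v_{\lambda_0}$ is smooth and bounded on $\overline{B_{r_0}^+}$, and the singularity in the equation for $w_{\lambda_0}$ comes only from the $|x|^{\tau}v^p$ and $|x|^{-\alpha}v^q$ terms attached to $v$ itself.

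First I would record that by (\ref{proposition 2.4-1-3}) and continuity, $w_{\lambda_0} \ge 2\gamma_0$ on the compact set $\partial B_{r_0}^+ \cap \partial B_{r_0}$ for some $\gamma_0 > 0$ (this is the "outer" sphere $|x| = r_0$, $t \ge 0$, which lies in $\tilde\Sigma_{\lambda_0}\setminus T_{\lambda_0}$ because $r_0 < |\lambda_0|$). Next, shrinking $\gamma_0$ if necessary, I would take the auxiliary function
\begin{equation}\label{barrier-gamma}
\psi(x) = \gamma_0 - \gamma_0\,\frac{r^{n-2}}{|x|^{n-2}} + \gamma_0\,\frac{t^2}{2}, \qquad x \in B_{r_0}^+ \setminus B_r^+,
\end{equation}
and set $G(x) = w_{\lambda_0}(x) - \psi(x)$, mimicking the proof of Lemma \ref{singular}. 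On $S = \{\,|x|^{\tau}v^p > \gamma_0\,\} \cap (B_{r_0}\setminus B_r)$ one checks directly (using $|x^{\lambda_0}|^{\tau}v_{\lambda_0}^p \ge 0$ and smallness of $\gamma_0$ relative to $r_0^{-(\tau+2)/(p-1)}$, as in Corollary \ref{singular-cor1}) that $\Delta w_{\lambda_0} \ge |x|^\tau v^p - |x^{\lambda_0}|^\tau v_{\lambda_0}^p$ is large, hence $G>0$ there; on the complement $\Delta G \le 0$. The boundary pieces are handled as before: $G \ge w_{\lambda_0} > 0$ on $\partial B_r^+ \cap \partial B_r$, and $G \ge w_{\lambda_0} - \gamma_0 \ge \gamma_0 > 0$ on $\partial B_{r_0}^+ \cap \partial B_{r_0}$ by the choice of $\gamma_0$. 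If $G$ were negative somewhere, the maximum principle forces an interior-of-$\partial H$ negative minimum $x_0 = (x_0',0)$, so $\partial G/\partial t(x_0)\ge 0$; but from (\ref{the-minus}) and the fact that $v_{\lambda_0}$ is bounded and $|x^{\lambda_0}|$ is bounded below on $\overline{B_{r_0}^+}$, the boundary inequality $\partial w_{\lambda_0}/\partial t \le -c_2 w_{\lambda_0}$ together with $\partial \psi/\partial t = \gamma_0 t = 0$ at $x_0$ gives $\partial G/\partial t(x_0) \le -c_2(x_0) w_{\lambda_0}(x_0) < 0$ once we know $w_{\lambda_0}(x_0) > 0$ — and $w_{\lambda_0} \ge \psi \ge 0$ is not yet known at $x_0$, which is exactly the point of the argument, so instead I use that at a negative minimum of $G$ we have $w_{\lambda_0}(x_0) < \psi(x_0) < \gamma_0$, while $w_{\lambda_0}(x_0) \ge 0$ (since $x_0 \in \tilde\Sigma_{\lambda_0}$), hence $\partial G/\partial t(x_0) \le -c_2(x_0) w_{\lambda_0}(x_0) - \gamma_0 t_0 \le 0$ with equality only if $w_{\lambda_0}(x_0) = 0$; a short separate argument (Hopf at $x_0$ applied to $w_{\lambda_0}$, which satisfies $-\Delta w_{\lambda_0} + c_1 w_{\lambda_0}\ge 0$ and vanishes at $x_0$ while being positive nearby) rules out $w_{\lambda_0}(x_0)=0$, giving the strict inequality and the contradiction. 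Then $G \ge 0$ in $\overline{B_{r_0}^+}\setminus\overline{B_r^+}$, and letting $r \to 0$ yields $w_{\lambda_0}(x) \ge \gamma_0 + \gamma_0 t^2/2 \ge \gamma_0$ on $B_{r_0}^+\setminus\{0\}$; take $\gamma = \gamma_0$.

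The main obstacle is the boundary-minimum case: because $w_{\lambda_0}$ is only known to be $\ge 0$ (not $>0$) a priori on $\partial H$, one has to be careful that the Hopf-type sign at a boundary minimum of $G$ genuinely contradicts (\ref{proposition 2.2-2})/(\ref{the-minus}). I expect the cleanest fix is to either absorb a strictly positive lower-order boundary term into the barrier (e.g. replace $\gamma_0 t^2/2$ by $\gamma_0 t^2/2 + \delta t$ with small $\delta>0$, so $\partial\psi/\partial t = \gamma_0 t + \delta > 0$ on $\partial H$, forcing a strict contradiction with $\partial w_{\lambda_0}/\partial t \le -c_2 w_{\lambda_0} \le 0$ when $w_{\lambda_0} \ge 0$), and then send $\delta \to 0$ at the end along with $r \to 0$. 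I would also double-check the uniform lower bound $|x^{\lambda_0}| \ge r_0$ on $\overline{B_{r_0}^+}$ — this is where the hypothesis $r_0 \le \frac12|\lambda_0|$ is used — to guarantee $c_2(x) = q|x^{\lambda_0}|^{-\alpha}\xi_2^{q-1}$ is bounded and that $v_{\lambda_0}$ contributes nothing singular.
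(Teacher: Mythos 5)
Your proposal follows essentially the same barrier argument as the paper: the auxiliary function $\psi$ you introduce is (up to a harmless normalization) the same $\varphi$ used in \eqref{lemma-2.1-0}, and the overall strategy --- split the annulus into the set where $\Delta(w_{\lambda_0}-\psi)>0$ and its complement, push a hypothetical negative minimum to $\partial H$, then contradict the boundary inequality, and finally send $r\to 0$ --- is exactly what the paper does.

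The one place where you deviate is the handling of the boundary minimum $x_0=(x_0',0)$, and there your caution, while not wrong, is unnecessary. You worry that $w_{\lambda_0}(x_0)$ might equal $0$ and propose either a Hopf step or modifying the barrier by $+\delta t$. In fact this is already taken care of on two separate grounds. First, \eqref{proposition 2.4-1-3} gives $w_{\lambda_0}(x)>0$ on all of $\tilde\Sigma_{\lambda_0}\setminus T_{\lambda_0}$, which \emph{includes} the flat boundary $\partial H\cap\tilde\Sigma_{\lambda_0}\setminus T_{\lambda_0}$; since $r<|x_0'|<r_0\le\tfrac12|\lambda_0|$ the point $x_0$ lies strictly off $T_{\lambda_0}$, so $w_{\lambda_0}(x_0)>0$ is already in hand and your ``short separate Hopf argument'' would just be re-deriving part of \eqref{proposition 2.4-1-3}. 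Second, and independently of that, the strict weight inequality $|x_0|<|x_0^{\lambda_0}|$ (a consequence of $r_0\le\tfrac12|\lambda_0|$, which is precisely why that hypothesis is there) already makes the boundary condition in \eqref{lemma-singularity-2-1} strict: since $v(x_0)\ge v_{\lambda_0}(x_0)>0$ and $|x_0|^\alpha<|x_0^{\lambda_0}|^\alpha$ one has $v^q(x_0)/|x_0|^\alpha > v^q(x_0)/|x_0^{\lambda_0}|^\alpha \ge v_{\lambda_0}^q(x_0)/|x_0^{\lambda_0}|^\alpha$, hence $\partial F_2/\partial t(x_0)<0$ strictly, even in the hypothetical case $w_{\lambda_0}(x_0)=0$. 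So both of your proposed fixes are valid but superfluous. On the set $S$ you also write it a bit differently from the paper (you drop the $-|x^{\lambda_0}|^\tau v_{\lambda_0}^p$ term from the defining inequality) and the deduction that $G>0$ there is left somewhat vague; this is roughly at the same level of terseness as the paper's own ``$w_{\lambda_0}>\gamma$'' remark and does not change the structure of the argument.
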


 \smallskip

%{\bf below, the following lemma is added to make sure $x^k$ stays uniformly bounded. 5-23-2019 Sufang}

 We also need next lemma to show that  negative minimum point of $ w_{\lambda} $ for $ \lambda<0$  will stay uniformly bounded.  We
  postpone the proofs of both lemmas till the end of the proof of this proposition.

  \medskip

 \begin{lemma}\label{neg-min-point}
For any fixed $ 0<\delta<-\lambda_0, $   there
exists an $R_0> 1$ depending only on $n, q, v,$
     such that if for any $\lambda \in (\lambda_0, \lambda_0+\delta),$ $ w_{\lambda}(x^0) = {\displaystyle \min_{\tilde \Sigma_{\lambda}} w_{\lambda}}
    <0,$ then  $|x^0|\leq R_0$.
\end{lemma}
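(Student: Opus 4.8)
The plan is to prove Lemma \ref{neg-min-point} by a contradiction-and-rescaling argument combined with the weighted maximum principle machinery already set up for $\varphi_\lambda$. Suppose the claim fails: there is a sequence $\lambda_k \in (\lambda_0, \lambda_0+\delta)$ and points $x^k$ with $w_{\lambda_k}(x^k) = \min_{\tilde\Sigma_{\lambda_k}} w_{\lambda_k} < 0$ but $|x^k| \to \infty$. First I would record that such a negative minimum, by the Maximum Principle applied to \eqref{the-minus} (or rather to the weighted function $\varphi_\lambda$ as in Proposition \ref{monotonicity}), must occur on the boundary $\partial H \cap \tilde\Sigma_{\lambda_k}$, so $x^k = ((x^k)', 0)$. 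Since $\lambda_k$ stays in a bounded interval bounded away from $0$, the reflected point $(x^k)^{\lambda_k}$ satisfies $|(x^k)^{\lambda_k}| \to \infty$ as well, and we can use the decay of $v$ coming from the Kelvin transform: $v(y) = |y|^{-(n-2)} u(y/|y|^2) \to 0$ as $|y|\to\infty$ because $u$ is continuous at $0$. Hence $v_{\lambda_k}(x^k) \to 0$, which forces $v(x^k) < v_{\lambda_k}(x^k) \to 0$ too.

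The heart of the argument is then to derive a contradiction from the sign of the boundary term, exactly as in Proposition \ref{monotonicity}. At the negative minimum $\bar x = x^k$ of $\varphi_{\lambda_k}$ on $\partial H$, we have $\partial \varphi_{\lambda_k}/\partial t(\bar x) \geq 0$, so from \eqref{proposition 2.2-2} we would need $c_4(\bar x) \geq 0$, i.e. $\frac{\beta}{|z|^2} \geq c_2(\bar x) = \frac{q}{|(x^k)^{\lambda_k}|^{\alpha}}\xi_2^{q-1}(x^k)$. But $\xi_2$ lies between $v$ and $v_{\lambda_k}$, both of which tend to $0$, and moreover $c_2(\bar x) \leq q\, v_{\lambda_k}(x^k)^{q-1} |(x^k)^{\lambda_k}|^{-\alpha}$, which, using $v_{\lambda_k}(x^k) \leq (A+o(1))|(x^k)^{\lambda_k}|^{-(n-2)}$, is bounded above by $q(A+o(1))^{q-1} |(x^k)^{\lambda_k}|^{-(n-2)(q-1)-\alpha} \to 0$. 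On the other hand $\frac{\beta}{|z|^2} = \beta |x^k + e_n|^{-2} \to 0$ as well, so a direct comparison is not immediately decisive; the point is that one must choose $\beta$ and compare rates. Actually the cleaner route is: since $w_{\lambda_k}$ has an \emph{interior} negative infimum ruled out by the maximum principle, and the boundary Hopf-type inequality $c_4(\bar x)\varphi_{\lambda_k}(\bar x) \geq \partial\varphi_{\lambda_k}/\partial t(\bar x) \geq 0$ together with $\varphi_{\lambda_k}(\bar x) < 0$ forces $c_4(\bar x) \leq 0$, i.e. $\frac{\beta}{|z|^2} \leq c_2(\bar x)$; this says $|x^k+e_n|^2 \geq \beta/c_2(\bar x)$, which is automatic and gives nothing — so instead I expect the real mechanism is a Harnack/boundary-estimate argument showing that if the minimum escapes to infinity then $w_{\lambda_k} \to 0$ near that minimum in a way incompatible with it being a \emph{strict} negative minimum, pushing the contradiction through comparison with a barrier like the one in Lemma \ref{singular} but centered far out.

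Concretely, the step I anticipate as the main obstacle is making the "escape to infinity" genuinely contradictory rather than merely vacuous: one needs a lower bound on $|w_{\lambda_k}(x^k)|$, or equivalently an upper bound on how negative the minimum can be, that is incompatible with the decay $v, v_{\lambda_k} \to 0$. The way I would handle this is to exploit that $w_{\lambda_k} = v - v_{\lambda_k}$ and $v_{\lambda_k}(x^k) \geq v(x^k) = v(x^k) > 0$, so $|w_{\lambda_k}(x^k)| = v_{\lambda_k}(x^k) - v(x^k) < v_{\lambda_k}(x^k) \to 0$; then revisit the differential inequality \eqref{proposition 2.2-1} on a unit ball around $x^k$, use interior elliptic estimates plus the boundary condition to get $\sup_{B_1(x^k)\cap H}|w_{\lambda_k}| \lesssim |w_{\lambda_k}(x^k)| + (\text{small}) \to 0$, and argue that this contradicts $w_{\lambda_k}(x^k)$ being the infimum over all of $\tilde\Sigma_{\lambda_k}$ unless $w_{\lambda_k} \geq 0$ somewhere forcing the actual structure — more precisely, it contradicts the fact that along the moving-plane process $w_{\lambda_k}$ is strictly positive on a fixed compact set (by Lemma \ref{singularity-2} or Lemma \ref{singular}) with a quantitative lower bound, while a negative minimum escaping to infinity would, via the maximum principle comparison on the large annular region between radius $1$ and $|x^k|$, force $w_{\lambda_k}$ to dip below that fixed positive bound — impossible. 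I would therefore structure the final write-up as: (1) reduce to a boundary minimum; (2) show $|x^k|\to\infty$ implies $v_{\lambda_k}(x^k)\to 0$, hence $|w_{\lambda_k}(x^k)|\to 0$; (3) apply the weighted maximum principle of Proposition \ref{monotonicity} on $\Sigma_{\lambda_k}$ with the barrier $\varphi_\lambda$ to conclude that $w_{\lambda_k}$ cannot have such a small negative minimum far out while remaining $\geq \gamma > 0$ on $B_{r_0}^+$, yielding the contradiction and the uniform bound $R_0$.
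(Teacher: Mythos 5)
There is a genuine gap: you correctly diagnose the obstruction but do not resolve it. You observe that if you try to run the same weighted argument as in Proposition \ref{monotonicity}, then at a negative boundary minimum $\bar x$ of $\varphi_{\lambda} = |z|^{\beta}w_{\lambda}$ one only gets $c_4(\bar x)\le 0$, i.e.\ $\beta/|z|^2 \le c_2(\bar x)$, and since
$$
c_2(\bar x)\;\lesssim\; q\,\gamma_0^{\,q-1}\,|\bar x|^{-[(n-2)(q-1)+\alpha]} \;=\; q\,\gamma_0^{\,q-1}\,|\bar x|^{-2}
$$
(using the Kelvin decay $|y|^{n-2}v(y)\to u(0)$ and the identity $(n-2)(q-1)+\alpha = 2$), both sides decay like $|\bar x|^{-2}$, so sending $|\bar x|\to\infty$ gives nothing. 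That observation is accurate. But you then fall back on a vague Harnack/boundary-estimate/annular-comparison argument, and your concluding outline re-invokes ``the weighted maximum principle of Proposition \ref{monotonicity} with the barrier $\varphi_\lambda$'' --- precisely the approach you just showed is indecisive. Nothing in your sketch produces the quantitative contradiction; the step you flag as the ``main obstacle'' remains unaddressed.

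What the paper does instead is replace $|z|^{\beta}$ by a genuinely different comparison function. It divides rather than multiplies, setting $\bar w_\lambda = w_\lambda/\phi$ with
$$
\phi(x)=(n-2)^{(n-2)/2}\Bigl(\frac{\sigma}{|x'|^2+(t+\sigma)^2}\Bigr)^{(n-2)/2},
$$
a translate of the fundamental solution, hence harmonic in $\overline H$ for $\sigma>0$, with $\partial_t\phi|_{t=0}=-\phi^{n/(n-2)}$. Because $\Delta\phi=0$, the interior equation for $\bar w_\lambda$ keeps the favorable zeroth-order coefficient $c_1>0$; and on $\partial H$ one finds
$$
\frac{\partial\bar w_\lambda}{\partial t}=\bar c(x)\,\bar w_\lambda,\qquad
\bar c(x)=\frac{(n-2)\sigma}{|x'|^2+\sigma^2}-\frac{v^q/|x|^\alpha - v_\lambda^q/|x^\lambda|^\alpha}{w_\lambda}.
$$
When $w_\lambda<0$ the second term is bounded above by $q\gamma_0^{\,q-1}/|x|^{2}$ exactly as you computed, while the first term is $\sim (n-2)\sigma/|x|^{2}$. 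The crucial point you miss is that $\sigma$ is a \emph{free} parameter, unconstrained by the interior maximum principle, whereas in your approach $\beta$ must lie in $(0,n-2)$ for the coefficient $\beta(n-2-\beta)/|z|^2$ to stay nonnegative. Choosing $(n-2)\sigma>q\gamma_0^{\,q-1}$ makes $\bar c(x)>0$ for all $|x|\ge R_0$ with an explicit $R_0$, and the maximum principle then forces any negative minimum to lie in $\overline{B_{R_0}}$. So the missing ingredient is this specific harmonic bubble barrier with its tunable decay scale $\sigma$; without it, the rate comparison you correctly identified as the crux cannot be closed.
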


   \medskip
We now continue the proof of Proposition \ref{lamubda=0}. By the definition of $ \lambda_0$, we know that  there exists
 a sequence  $ \lambda_k \to \lambda_0 $
    with $ \lambda_k > \lambda_0$ such that
    $$
    \inf_{\tilde \Sigma _{\lambda_k}} w_{\lambda_k} <0.
    $$
 Clearly, $ {\displaystyle
    \lim_{|x|\rightarrow +\infty}} w_{\lambda_k} (x)=0.$
From Lemma \ref{singularity-2} and the continuity of $v(x)$ away from the origin, we know that for $k$ large enough, there exists $\gamma>0$ such that
$$
 w_{\lambda_k}(x) >
   \frac{ \gamma}{2} \quad \text{in}~~  B_{r_0}^+ \setminus \{0\}.
$$
It follows that there exists  a point $ x^k = ((x^k)', t^k) \in \tilde
    \Sigma_{\lambda_k} \setminus {B_{r_0}^+},$  such that
\begin{equation}\label{proposition 2.4-2}
    w_{\lambda_k}(x^k) = {\displaystyle \min_{\tilde \Sigma_{\lambda_k}} w_{\lambda_k}}
    <0.
\end{equation}

Moreover,  Lemma \ref{neg-min-point} implies $|x^k|\leq R_0.$
From equation \eqref{the-minus} and the  Maximum Principle, we know that $x^k$ cannot be a interior point. Thus $x^k$ must be on the lateral boundary:
$$
\{(x_1,..., x_{n-1}, t): \  t=0,\  x_1>\lambda_k, \   r_0 \leq|x|\leq R_0 \} .
$$
Hence,
\begin{equation}\label{proposition 2.4-3}
\frac{\partial w_{\lambda_k}}{\partial x_1}  (x^k)=0, \quad (\text{since  along }\ x_1 \ \text{direction is the tangential direction}).
 \end{equation}
%By \eqref{proposition 2.2-3}, $c_4(x^k)>0$ if $|x^k|>1.$ From the boundary condition \eqref{proposition 2.2-2}, $\varphi_{\lambda_k}$ cannot take a negative minimum at a point on %the boundary $\{t=0\}$ where  $c_4(x)>0.$
%Let $R>0$ large enough, then $ w_{\lambda_k}(x^k) \rightarrow 0$ as $|x^k|>R.$
%So we must have $|x^k|\leq R.$ Therefore,

%{\bf below, why $x^k$ stays uniformly bounded is not proved. 5-21-2019 MZ}

%Since $ w_{\lambda_k}(x^k) \rightarrow 0$ as $|x^k|\rightarrow \infty,$

Therefore, there is a sequence of $x^k,$ still denoted by $x^k$, such that  ${\displaystyle \lim_{k\rightarrow \infty}} x^k = x^0.$ By the continuity of $w_{\lambda},$ it holds that $0\leq w_{\lambda_0}(x^0)={\displaystyle \lim_{k\rightarrow \infty}} w_{\lambda_k}(x^k)\leq 0,$  that is, $ w_{\lambda_0}(x^0)=0,$ thus $ x^0=(x^0_1,\ldots, x^0_{n-1}, t^0) \in T_{\lambda_0}\cap \{t=0\}.$ From \eqref{proposition 2.4-3}, we have
\begin{equation}\label{proposition 2.4-4}
 w_{\lambda_0} (x^0)=0, \quad \frac{\partial w_{\lambda_0}}{\partial x_1}  (x^0)=0.
\end{equation}
It follows that from \eqref{proposition 2.4-1-3} and \eqref{proposition 2.4-4} that $|x^0_1|=\lambda_0$ and $t^0=0.$

We need another  lemma to handle  the partial derivative about the first variable $x_1$ at the corner point.
%Again the proof is relegated to the end of this section. %and we postpone its proof till the end.
\smallskip
\begin{lemma}\label{normal-derivative-0}\ Suppose \eqref{proposition 2.4-1-2} and \eqref{proposition 2.4-1-3} hold, then $\frac {\partial w_{\lambda_0}}{\partial x_1} (x^0) >0$ for all $ x^0=(x^0_1,\ldots, x^0_{n-1} , 0)$ with $x^0_1=\lambda_0.$
\end{lemma}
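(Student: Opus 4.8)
The plan is to set up a Hopf-type lemma at the corner point $x^0 = (x^0_1, \ldots, x^0_{n-1}, 0)$ with $x^0_1 = \lambda_0$, where two boundaries meet: the reflection hyperplane $T_{\lambda_0}$ and the lateral boundary $\partial H$. We already know from \eqref{proposition 2.4-1-3} that $w_{\lambda_0} > 0$ in the interior of $\tilde\Sigma_{\lambda_0}$ and that $\partial w_{\lambda_0}/\partial x_1 > 0$ on $T_{\lambda_0} \cap H$; the only thing left is to extend strict positivity of the $x_1$-derivative down to the corner on $\partial H$. First I would work locally: fix a small half-ball $D = B_\rho^+(x^0) \cap \Sigma_{\lambda_0}$ (so a quarter-ball-like region near the corner) on which $c_1, c_2$ are bounded, $w_{\lambda_0} \geq 0$ satisfies $-\Delta w_{\lambda_0} + c_1 w_{\lambda_0} \geq 0$ in $D$ and $\partial w_{\lambda_0}/\partial t \leq -c_2 w_{\lambda_0} \leq 0$ on the flat part $\partial D \cap \partial H$, and $w_{\lambda_0} > 0$ on the part of $\partial D$ interior to $\tilde\Sigma_{\lambda_0}$.

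The key step is to construct an explicit comparison (barrier) function $\psi$ on $D$ that vanishes at $x^0$, is nonnegative, satisfies $-\Delta\psi + c_1\psi \leq 0$ in $D$, satisfies the correct differential inequality $\partial\psi/\partial t \leq 0$ (or $\leq -c_2\psi$) on $\partial D \cap \partial H$, is $\leq w_{\lambda_0}$ on the remaining boundary of $D$ after multiplying by a small constant, and — crucially — has $\partial\psi/\partial x_1(x^0) > 0$. A natural candidate is a function of the form $\psi(x) = (x_1 - \lambda_0)\,\eta(x)$ where $\eta$ is a standard Hopf barrier (e.g. $\eta = e^{-\kappa|x - y_0|^2} - e^{-\kappa\rho^2}$ built on a ball $B_\rho(y_0)$ tangent to $\partial D$ from inside at a suitable point, or a power of the distance to an exterior ball), with $\kappa$ large. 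The factor $(x_1 - \lambda_0)$ is nonnegative on $\Sigma_{\lambda_0}$, vanishes on $T_{\lambda_0}$, and is independent of $t$, so it does not spoil the sign of the $t$-derivative on $\partial H$; meanwhile $\partial\psi/\partial x_1(x^0) = \eta(x^0) > 0$ if the barrier ball is chosen so that $x^0$ is not its tangency point. One then checks that $-\Delta\psi + c_1\psi \le 0$ holds for $\kappa$ large (the cross term $-2\partial_{x_1}\eta$ and the zeroth-order term are absorbed), applies the maximum principle to $w_{\lambda_0} - \delta\psi$ on $D$ for small $\delta > 0$ to get $w_{\lambda_0} \geq \delta\psi$ in $D$, and concludes $\partial w_{\lambda_0}/\partial x_1(x^0) \geq \delta\,\partial\psi/\partial x_1(x^0) > 0$.

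The main obstacle is the corner geometry: at $x^0$ two boundary conditions of different types (Dirichlet-like positivity across $T_{\lambda_0}$ coming from the moving-plane setup, and the oblique/Neumann condition on $\partial H$) meet, so a single off-the-shelf Hopf lemma does not apply directly and the barrier must be engineered to respect both simultaneously. The trick of factoring out $(x_1 - \lambda_0)$, which is harmonic and $t$-independent, is precisely what decouples the two conditions: it automatically kills the barrier on $T_{\lambda_0}$ while leaving the flat-boundary inequality of the right sign, so that the construction reduces to an essentially one-sided Hopf argument in the remaining variables. A secondary technical point is to verify the differential inequality for $\psi$ near the corner where $(x_1 - \lambda_0)$ and $\eta$ both degenerate; this is handled by taking $\rho$ small and $\kappa$ large and keeping track only of leading-order terms, which I would not write out in full.
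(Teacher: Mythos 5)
Your overall architecture matches the paper's: both proofs build an explicit barrier that vanishes on $T_{\lambda_0}$ and has a strictly positive $x_1$-derivative at $x^0$, compare $w_{\lambda_0}$ with a small multiple of the barrier via the maximum principle on a corner neighborhood, and then read off the derivative bound. The paper's barrier is $h_1(x)=\varepsilon\bigl(\tfrac{1}{x_1^2}-1\bigr)\bigl(\tfrac{\mu t^2}{2}+1-\mu\bigr)$, which is exactly of your product form ``(vanishing factor in $x_1$) $\times$ (factor with controlled $t$-behavior)''; the paper simply chooses the second factor to be a quadratic in $t$ whose $t$-derivative vanishes identically on $\partial H$, so the delicate parameter bookkeeping can be done by hand with the three constants $\varepsilon,\mu,a$.

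There are, however, two real problems with your sketch. First, the sign on the boundary condition for the barrier is backwards. At a putative negative minimum $\tilde{x}\in\partial H$ of $w_{\lambda_0}-\delta\psi$ one has $\partial_t(w_{\lambda_0}-\delta\psi)(\tilde{x})\ge 0$, while the original boundary data give $\partial_t w_{\lambda_0}=-|x|^{-\alpha}v^q+|x^{\lambda_0}|^{-\alpha}v_{\lambda_0}^q<0$ strictly (this is where $|x|<|x^{\lambda_0}|$ and $v>0$ enter). To get the contradiction $\partial_t(w_{\lambda_0}-\delta\psi)(\tilde{x})<0$ you therefore need $\partial_t\psi\ge 0$ on $\partial H$, not $\partial_t\psi\le 0$ (and certainly not $\partial_t\psi\le -c_2\psi$). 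With the Gaussian barrier $\eta=e^{-\kappa|x-y_0|^2}-e^{-\kappa\rho^2}$ this forces $y_{0,n}\ge 0$, not $y_{0,n}\le 0$; the paper sidesteps the issue entirely by making the $t$-factor have $\partial_t=0$ on $\partial H$. Second, the ``secondary technical point'' you wave away is in fact the substance of the lemma. Writing $\Delta\psi=(x_1-\lambda_0)\Delta\eta+2\partial_{x_1}\eta$, near $T_{\lambda_0}$ it is the nondegenerate cross term $-2\partial_{x_1}\eta$, not the $\kappa^2$-term, that controls the sign of $-\Delta\psi+c_1\psi$; with a Gaussian barrier this requires $y_{0,1}\ge x_1$ on all of $D$, i.e.\ a very specific placement of the barrier center relative to the corner, and it is not a matter of ``taking $\kappa$ large.'' The paper's explicit choice of $h_1$ is designed precisely so that this corner computation closes (its Laplacian is bounded below by $\varepsilon\mu(\tfrac{1}{x_1^2}-1)$), and the elaborate choice of $\mu$ versus $C_3$ at the end of the paper's proof is exactly the bookkeeping you omit. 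As written, your proposal asserts the right conclusion and the right general shape of barrier, but the two steps that actually make a corner Hopf lemma nontrivial — the orientation of the boundary inequality and the verification of the interior inequality where the two degeneracies meet — are both either wrong or missing.
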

\medskip

Using Lemma \ref{normal-derivative-0}, we reach a contradiction due to \eqref{proposition 2.4-4}.

We are left to prove above three lemmas.

%{\bf 5-25 MZ}

\smallskip
\noindent{\bf Proof of Lemma \ref{singularity-2}.}\ For $0<r<r_0, ~x\in {B_{r_0}^{+}} \setminus {B_r^{+}}$ and $\varepsilon>0$ being a positive constant satisfying $\min_{\partial {B_{r_0}^{+}}} w_{\lambda_0} \geq \varepsilon,$ let $\varphi (x)$ be defined  in \eqref{lemma-2.1-0},
and
  $ F_2(x)= w_{\lambda_0}(x)- \varphi (x)$. Then $ F_2(x)$ satisfies
\begin{equation}\label{lemma-singularity-2-1}
  \left\{
  \aligned
   & \Delta F_2 =  |x|^{\tau} v^p- |x^{\lambda_0}|^{\tau} v_{\lambda_0}^p-
   \epsilon, \quad\quad\ \
   x\in {B_{r_0}^{+}} \setminus
   {B_r^{+}},   \\
    &\frac {\partial F_2}{ \partial t}=-\frac{1}{|x|^{\alpha}}v^q(x)+\frac{1}{|x^{\lambda_0}|^{\alpha}}v_{\lambda_0}^q(x),  \quad x\in
    \partial({\overline{B_{r_0}^{+}}} \setminus { \overline{B_r^{+}}})
    \cap \partial H.
    \endaligned
    \right.
\end{equation}
Let
$$
\aligned
   & S=\{x:\ |x|^{\tau} v^p- |x^{\lambda_0}|^{\tau} v_{\lambda_0}^p-
   \epsilon>0 \}\cap ({B_{r_0}^{+}} \setminus
   {B_r^{+}}),\\
   &S^c=\{x:\ |x|^{\tau} v^p- |x^{\lambda_0}|^{\tau} v_{\lambda_0}^p-
   \epsilon \leq 0 \}\cap ({B_{r_0}^{+}} \setminus
   {B_r^{+}}).
   \endaligned
$$

In $S,$ $|x|^{\tau}( v^p-v_{\lambda_0}^p) \geq |x|^{\tau} v^p- |x^{\lambda_0}|^{\tau} v_{\lambda_0}^p>\varepsilon.$ It follows from $ |x|^{\tau}( v^p-v_{\lambda_0}^p) = p |x|^{\tau} \cdot \xi_1^{p-1}(x) w_{\lambda_0} $  that $ w_{\lambda_0}>\gamma$ for some $\gamma>0$ depending on $r_0$ and $\lambda_0.$

On the other hand, in $S^c,$ $\Delta F_2\leq 0.$ On $\partial {B_{r_0}^{+}}\cap \partial{B_{r_0}} ,$  $F_2(x)\geq \varepsilon-(\frac{\varepsilon}{2}-\frac{r^{n-2}\varepsilon}{|x|^{n-2}}+\frac{\varepsilon t^2}{2})>0;$
on $\partial {B_r^{+}}\cap \partial{B_r} , $ $F_2(x)\geq \varepsilon-(\frac{\varepsilon}{2}-\varepsilon +\frac{\varepsilon t^2}{2})>0.$ Suppose there exists $x_0 \in S^c,$ such that
$
F_2(x_0)=\min F_2(x)<0,$
then from the Maximum Principle we know that $x_0 \in \partial ({B_{r_0}^{+}} \setminus
   {B_r^{+}}) \cap \partial H. $
Then $\frac {\partial F_2}{ \partial t}(x_0)\geq 0$, which contradicts to the boundary condition of \eqref{lemma-singularity-2-1}. Thus  $F_2 \geq 0 $ in   $ S^c$  also.%This establishes \eqref{lemma-singularity-2-2}.

Sending  $ r \to 0$, we obtain Lemma \ref{singularity-2}.
%
%we will show
%\begin{equation}\label{lemma-singularity-2-2}
%\,\, {B_{r_0}^{+}} \setminus
%   {B_r^{+}}
%\end{equation}

\medskip
\noindent {\bf Proof of Lemma \ref{neg-min-point}.}\ Choose a test function
 $$
 \phi(x)=(n-2)^{(n-2)/2}(\frac{\sigma}{|x'|^2+(t+\sigma)^2})^{(n-2)/2},
 $$
 where $\sigma$ satisfies $(n-2)\sigma>q \gamma_0^{q-1}$, and $\gamma_0$ is a positive constant satisfying $ 1/{\gamma_0}\leq |x|^{n-2} v(x)\leq \gamma_0$ for  $|x|\geq1.$
 Let$\bar{w}_\lambda(x)=\frac{w_\lambda(x)}{\phi(x)},$
 then it is a straight forward calculation to verify that
 \begin{equation}\label{neg-min-point-1}
     -\triangle \bar{w}_\lambda - 2(\nabla \phi/\phi ) \cdot \nabla {\bar{w}_\lambda}  + c_1(x) {\bar{w}_\lambda} \geq 0 , \quad \text{in}~\Sigma_{\lambda}\setminus B_{R_0}^+,
 \end{equation}
where  $c_1(x)>0$ is the same as that in \eqref{the-minus}.   And
\begin{equation}\label{neg-min-point-2}
        \frac{\partial \bar{w}_\lambda}{\partial t} =
   \bar{c}(x) \bar{w}_\lambda,   \quad \text{on} \ \ (\tilde{\Sigma}_{\lambda}\setminus B_{R_0}^+)\cap \partial H
   \end{equation}
where
\begin{equation*}\label{neg-min-point-3}
        \bar{c}(x)=g^{n/(n-2)}-\frac{v^q(x)/|x|^\alpha-v_\lambda^q(x)/|x^\lambda|^\alpha}{w_\lambda(x)} \quad \text{on} \ \ \partial H \cap (\tilde \Sigma_{\lambda}\setminus B_{R_0}).
    \end{equation*}
One can show that $ \bar{c}(x)>0$ on $\partial H \cap (\tilde \Sigma_{\lambda}\setminus B_{R_0})$ 
when $w_\lambda(x)<0$ and  $\alpha=n-q(n-2)>0$ (see, for example, Ou \cite{Ou1996}).
 We thus obtain Lemma \ref{neg-min-point}  from \eqref{neg-min-point-1} and \eqref{neg-min-point-2} by using the Maximum Principle.

\medskip
\noindent{\bf Proof of Lemma \ref{normal-derivative-0}.} Without loss of generality, we assume $\lambda_0=-1.$ Set $\Omega_1=\{x=(x_1, \ldots, x_{n-1}, t) | -1<x_1<-1/2, \  x_2^2+\ldots+x_{n-1}^2+t^2 <a\}$ with $0<a \in (0, 1) $ to be chosen later.
Due to the continuity of $v$ in $H\setminus \{0\},$ there exists a positive constant $C_1$ such that
 \begin{equation}\label{lemma 2.6-1}
    v_{\lambda_0}(x)<C_1<+\infty \ \ \ \text{for}\ \
    x\in \overline{\Omega}_1.
   \end{equation}
Let
$$
h_1(x)=\varepsilon ( \frac{1}{x_1^2}-1)(\frac{t^2 \mu}{2}+ 1-\mu),\ \ \  x\in \Omega_1,
$$
where $0<\varepsilon,\mu<1$ will be chosen later. Let $A_1(x)= w_{\lambda_0}(x)- h_1(x).$ Then $A_1(x)$ satisfies the following equation:
\begin{equation}\label{lemma 2.6-2}
  \left\{
  \aligned
   \Delta A_1(x) &=|x|^{\tau} v^p- |x^{\lambda_0}|^{\tau} v_{\lambda_0}^p- \Delta h_1(x) \quad\quad \text{in} \ \Omega_1\\
    \frac {\partial A_1}{ \partial t} &= -\frac{1}{|x|^{\alpha}}v^q+\frac{1}{|x^{\lambda_0}|^{\alpha}}v_{\lambda_0}^q \leq 0  \quad\quad  \text{on} \ \
   \partial \Omega_1  \cap \partial H,
    \endaligned
    \right.
\end{equation}
%where $\tau, \alpha$ are as in \eqref{kelvin},
and
\begin{equation}\label{lemma 2.6-2-1}
\Delta h_1(x)=6\varepsilon x_1^{-4}(\frac{t^2 \mu}{2}+ 1-\mu)+\varepsilon \mu(x_1^{-2}-1).
\end{equation}

For suitably chosen $\varepsilon$ and $\mu,$ we want to show
\begin{equation}\label{lemma 2.6-3}
A_1(x)= w_{\lambda_0}(x)- h_1(x)\geq 0, \quad \forall x \in \Omega_1.
\end{equation}
Using \eqref{proposition 2.4-1-3}, %{\bf Why?? 5-26 MZ}
we can choose $\varepsilon_0>0$ small enough, such that for all $0<\varepsilon<\varepsilon_0,$ we have $A_1(x)\geq 0$ on ${\partial \Omega_1} \cap \{\{x_1=-1/2\}\cup \{ x_2^2+\ldots+x_{n-1}^2+t^2 =a \}\}.$ Also, from the construction of $h_1$ we know $A_1(x)=0$ on $\partial {\Omega_1} \cap \{x_1=-1\}.$ Suppose the contrary to \eqref{lemma 2.6-3}; there exists some $\bar{x}=(\bar{x}_1,\ldots, \bar{x}_{n-1}, \bar{t})\in \overline{\Omega_1} $ such that
\begin{equation}\label{lemma 2.6-4}
A_1(\bar{x})= \min_{\overline{\Omega}_1}A_1< 0.
\end{equation}
From the above and the boundary condition of \eqref{lemma 2.6-2}, we have $\bar{t}>0, \bar{x}_2^2+\ldots+\bar{x}_{n-1}^2+\bar{t}^2<a,\ -1< \bar{x}_1<-1/2.$ Thus
\begin{equation}\label{lemma 2.6-5}
\Delta A_1(\bar{x})\geq 0.
\end{equation}
From \eqref{lemma 2.6-4}, we have $v(\bar{x})-v_{\lambda_0}(\bar{x})-h_1(\bar{x})<0,$ and then
\begin{equation}\label{lemma 2.6-6}
v(\bar{x})<C_2<+\infty
\end{equation}
for some constant $C_2$ depending only on $C_1.$ By \eqref{lemma 2.6-1}, \eqref{lemma 2.6-6} and the Mean Value Theorem we have $|\bar{x}|^{\tau} v^p(\bar{x})- |\bar{x}^{\lambda_0}|^{\tau} v_{\lambda_0}^p(\bar{x}) \leq C_3 w_{\lambda_0}(\bar{x}) $ for some positive constant $C_3$ depending only on $C_1, C_2$ and $\lambda_0.$ Hence, it follows from \eqref{lemma 2.6-2}, \eqref{lemma 2.6-2-1} and \eqref{lemma 2.6-5} that
\begin{equation}\label{lemma 2.6-7}
C_3 w_{\lambda_0}(\bar{x})\geq \Delta h_1(\bar{x})> \varepsilon \mu (\frac{1}{x_1^2}-1).
\end{equation}
Again by \eqref{lemma 2.6-4},
\begin{equation}\label{lemma 2.6-8}
  w_{\lambda_0}(\bar{x}) < h_1(\bar{x}) < \varepsilon  (\frac{1}{x_1^2}-1)(\frac{\mu }{2C_3}+ 1-\mu),
\end{equation}
where  $\bar{t}^2<a$, and we have chosen $0<a \leq {C_3}^{-1}.$ Therefore, we choose $a\leq \min \{  {C_3}^{-1}, 1\}$ such that both \eqref{lemma 2.6-7} and \eqref{lemma 2.6-8} hold.  Combining \eqref{lemma 2.6-7} and \eqref{lemma 2.6-8}, we have
$$
 \varepsilon  C_3 (\frac{1}{x_1^2}-1)(\frac{\mu }{2C_3}+ 1-\mu)> \varepsilon \mu (\frac{1}{x_1^2}-1);
$$
i. e., $\mu<\frac{2 C_3}{1+2 C_3}.$ If $\mu$ is chosen in such a way that $\mu>\frac{2 C_3}{1+2C_3}$ from the beginning, we reach a contradiction. Thus \eqref{lemma 2.6-3} holds. Since we also know that $A_1(x^0)=0,$ we have
$$
\frac {\partial A_1}{\partial {x_1}} (x^0)\geq 0.
$$
It follows from a direct computation that
$$
\frac {\partial w_{\lambda_0}}{\partial {x_1}}(x^0) =\frac {\partial A_1}{\partial {x_1}} (x^0)+\frac {\partial h_1}{\partial {x_1}} (x^0) \geq \frac {\partial h_1}{\partial {x_1}} (x^0)=2 \varepsilon (1-\mu) >0.
$$
Hence, Lemma \ref{normal-derivative-0} is established.

We hereby complete the proof of Proposition \ref{lamubda=0}.
\end{proof}

\medskip
{\bf Proof of Theorem \ref{Class-noncritical}.}\ From Proposition \ref{lamubda=0}, we know that $v(-x_1, x_2,  \cdots, t)\le v(x_1, x_2,  \cdots, t)$ for $x_1 \ge 0$. Similarly, if we  move planes from
the positive direction of
$ x_1$, we see that $v(-x_1, x_2,  \cdots, t)\ge v(x_1, x_2,  \cdots, t)$ for $x_1 \ge 0$. Thus, $ v(x', t)$ is symmetric with
respect to $x_1$.
Clearly the above argument can be  applied to any direction
perpendicular to $t$-axis, therefore we conclude  that
$v(x',t) =v(|x'|, t).$
It follows that  $u(x',t)=u(|x'|, t)$
due to the inverse
Kelvin transformation. Since we can choose the origin arbitrarily on
the hyperplane $t=0$, it is easy to see that $ u(x',t)$ is independent
of  $ x'$.  \eqref{elliptic-our} is reduced to
the following ordinary differential equation:
\begin{equation}\label{ordinary}
\left\{
\aligned
&u''(t)  = n(n-2) u^p(t), \ \ u > 0 ,\ \  t>0 ,\\
& u'(0) = -c_+ u^q(0).
\endaligned
\right.
\end{equation}
It is easy to check that \eqref{ordinary} has the following unique  positive solution
%(see \cite{CSF1996} for details {\bf why you need to cite this paper? you can not check by yourself? 5-26-2019}):
$$
u=
(\frac{p-1}{2}At+B)^{-2/{(p-1)}}
$$
where $A=(\frac{2n(n-2)}{p+1})^{1/2}$ and  $B= (c_+^{-1}A)^{-(p-1)/(2q-p-1)}.$
This completes the proof of Theorem \ref{Class-noncritical}.

%%%%%%%%%%%%%%%%%%%%%%%%%%%%%%%%%%%%%%%%%%%%%%%%%%%%%%%%%%%%%%%%%%%%%%%%%%%%%%%%%%%%%%%%%%%%%%%%%%%%%%%%%%%%%%%%%%%%%%
\section{Critical results\label{Section 3}}

In this section, we shall deal with the critical case and prove Theorem 1.2.

Assume $\ u(x) \in C^2(H) \cap C^1(\bar H)$ solves \eqref{elliptic-our}. Throughout this whole section, we assume that $ p=\frac{n+2}{n-2}, q=\frac{n}{n-2}$ (and $n \ge 3$).

For any given point $b \in \partial H= \Bbb{R}^{n-1}, $ we define the Kelvin transformation of $u$ centered at $b$ by
$$
v_b(x) = \frac 1{|x|^{n-2}} u_b\Big( \frac x {|x|^2}\Big),
$$
where $u_b(x)=u(x'+b, t).$ If $b=0$ is the origin, for simplicity, we denote $v(x)=v_0(x)$.
%Let
%$$
%v(x) = \frac 1{|x|^{n-2}} u\Big( \frac x {|x|^2}\Big),
%$$
Then $ v(x)$ satisfies
\begin{equation}\label{kelvin-2}
\left\{
\aligned
 & \Delta v =n(n-2) v^p, \ \ v(x)>0 \ \ \text{ in} \ \ H, \\
& \frac {\partial v}{\partial t}  = - c_+ v^q \ \ \ \text{ on} \ \
\partial H \setminus \{0\}.
\endaligned
\right.
\end{equation}
%where $p=\frac{n+2}{n-2}$ and $q=\frac{n}{n-2}$ are the critical power.

%For $b \in \partial H= \Bbb{R}^{n-1}, $ we define the Kelvin transformation of $u$ centered at $b$ by
%$$
%v_b(x) = \frac 1{|x|^{n-2}} u_b\Big( \frac x {|x|^2}\Big),
%$$
%where $u_b(x)=u(x'+b, t).$

For $\lambda>0$ and $b\in \partial H,$ define
$$
w_{\lambda, b}(x) = v_b(x)-\frac {\lambda^{n-2}}{|x|^{n-2}} v_b\Big( \frac {\lambda^2 x} {|x|^2}\Big).
$$
In the rest of this section,  we always write $v_\lambda(x)=(\lambda^{n-2}/|x|^{n-2}) v( \lambda^2 x/|x|^2),$ $w_{\lambda}(x)=v(x)-v_\lambda(x).$ Clearly, $w_\lambda$ satisfies
\begin{equation}\label{kelvin-minus}
\left\{
\aligned
 & \Delta w_\lambda =n(n-2) (v^p- v_\lambda ^p)=c_5(x)  w_\lambda \ \ \ \  \ \ \text{ in} \ \ B_\lambda^+, \\
& \frac {\partial w_\lambda}{\partial t}  = -c_+ (v^q- v_\lambda ^q)=- c_6(x)  w_\lambda \ \ \ \text{ on} \ \
\partial B_\lambda^+\cap\partial H \setminus \{0\},
\endaligned
\right.
\end{equation}
where $ c_5(x)= n(n+2) \xi_3(x)^{4/(n-2)}$, $ c_6(x) = c_+(n-2)^{-1}n
     \xi_4(x)^{2/(n-2)}$, $\xi_3 $ and  $\xi_4$ are two
    functions
    between $v_{\lambda}$ and $ v$.

\medskip
We need to establish a lemma which  will be used to handle the possible singular point.

\begin{lemma}\label{singular-3}
 Let   $v\in C^2( H )\cap
C^1( \bar H)\setminus \{0\}$
 solve \eqref{kelvin-2}. Then for any $0<\epsilon <
{\min\{ R^{(2-n)/2},
 \min_{\partial B_R^{+}\cap \partial B_R} v\}},$ we have $
v(x) \geq \frac {\epsilon}{2} $ for all  $x \in
\overline{{B}_R^{+}}\setminus
\{0\}$.
\end{lemma}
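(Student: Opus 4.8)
The statement is precisely Lemma \ref{singular} (and Corollary \ref{singular-cor1}) transported to the critical equation \eqref{kelvin-2}, where the bulk term is $n(n-2)v^p$ with $p=\frac{n+2}{n-2}$ and the boundary term is $c_+ v^q$ with $q=\frac{n}{n-2}$. So the plan is to imitate the barrier argument from the proof of Lemma \ref{singular} verbatim, tracking how the exponents and constants change. Fix $0<r<R$ and set
$$
\varphi(x)=\frac{\epsilon}{2}-\frac{r^{n-2}\epsilon}{|x|^{n-2}}+\frac{\epsilon t^2}{2},\qquad x\in B_R^+\setminus B_r^+,
$$
and put $F(x)=v(x)-\varphi(x)$. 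Since $\varphi$ is harmonic away from the origin except for the $t^2$ term, one computes $\Delta F=n(n-2)v^p-\epsilon$ in $B_R^+\setminus B_r^+$, while on the flat boundary $\partial(\overline{B_R^+}\setminus\overline{B_r^+})\cap\partial H$ one has $\partial F/\partial t=\partial v/\partial t=-c_+ v^q\le 0$.

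Next I split $B_R^+\setminus B_r^+$ into $S=\{n(n-2)v^p-\epsilon>0\}$ and its complement $S^c$. On $S$ we have $v^p>\epsilon/(n(n-2))$; but the right normalization of $\epsilon$ should make this force $v>\epsilon\ge\varphi$ directly — here is the one spot that needs checking. Since $n(n-2)\ge 3$ and $p>1$, from $v^p>\epsilon$ (which follows once $\epsilon\le 1$, say, combined with $n(n-2)>1$) we get $v>\epsilon^{1/p}\ge\epsilon$, hence $F>0$ on $S$. On $S^c$ we have $\Delta F\le 0$, so $F$ is superharmonic there. On the two spherical pieces of the boundary: on $\partial B_r^+\cap\partial B_r$ one has $\varphi=\frac{\epsilon}{2}-\epsilon+\frac{\epsilon t^2}{2}\le 0$ so $F\ge v>0$; on $\partial B_R^+\cap\partial B_R$ one has $\varphi<\frac{\epsilon}{2}+\frac{\epsilon R^2}{2}$, and using $\epsilon< R^{(2-n)/2}$ together with $\epsilon<\min_{\partial B_R^+\cap\partial B_R}v$ one checks $F=v-\varphi>0$ there as well (the role of the bound $R^{(2-n)/2}$ is to control the $\frac{\epsilon t^2}{2}\le\frac{\epsilon R^2}{2}$ term against $v\ge\epsilon$ for general $R$; for $R\le 1$ this is automatic and for $R>1$ the corollary's scaling handles it). If $F$ had a negative minimum in $\overline{B_R^+}\setminus\overline{B_r^+}$, then by the maximum principle combined with the superharmonicity on $S^c$ and positivity on $S$, the minimum would be attained at an interior point of the flat boundary $x_0=(x_0',0)$ with $r<|x_0'|<R$, forcing $\partial F/\partial t(x_0)\ge 0$, contradicting $\partial F/\partial t=-c_+v^q<0$ there (using $c_+>0$ and $v>0$). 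Hence $F\ge 0$ on $\overline{B_R^+}\setminus\overline{B_r^+}$, and letting $r\to 0$ gives $v\ge\epsilon/2$ on $\overline{B_R^+}\setminus\{0\}$.

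The only genuinely delicate point — and the one I would be most careful about — is pinning down exactly why the prescribed range $0<\epsilon<\min\{R^{(2-n)/2},\min_{\partial B_R^+\cap\partial B_R}v\}$ is the right one, i.e. making the estimate $F>0$ on the outer spherical boundary and the implication $v^p>\epsilon\Rightarrow v>\varphi$ both come out. Since in the original Lemma \ref{singular} the analogue is $R=1$ and the bound is $\min\{1,\min v\}$, I expect that here $R^{(2-n)/2}$ replaces the "$1$" precisely because of the factor $n(n-2)$ in the bulk term and the need to dominate $\epsilon t^2/2\le\epsilon R^2/2$ by $v$ on the sphere of radius $R$; equivalently one could first pass to $\bar v(x)=R^{(n-2)/2}v(Rx)$, which solves \eqref{kelvin-2} with the same constants on $B_1^+$, apply Lemma \ref{singular}'s argument with $R=1$, and scale back — exactly the mechanism of Corollary \ref{singular-cor1}. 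Everything else is the standard Kelvin-transform barrier computation and the sign bookkeeping $\tau\ge 0$, $\alpha\ge 0$ being replaced by the (harmless) exact criticality of the exponents.
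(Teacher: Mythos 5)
Your proposal follows the paper's own route: the paper's proof of this lemma is the two-line observation that for $R=1$ one repeats the barrier argument of Lemma \ref{singular} with $\tau=\alpha=0$, and that the general $R$ case follows by applying the $R=1$ case to $\bar v(x)=R^{(n-2)/2}v(Rx)$. Your closing paragraph identifies exactly this rescaling mechanism, so structurally you have the intended argument.

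Two details in the direct barrier computation you attempt on $B_R^+$ need fixing, though. First, on $S=\{n(n-2)v^p>\epsilon\}$ you claim that $n(n-2)>1$ together with $\epsilon\le 1$ lets you pass from $n(n-2)v^p>\epsilon$ to $v^p>\epsilon$; this runs backwards, since the inequality only gives $v^p>\epsilon/(n(n-2))$, which is \emph{smaller} than $\epsilon$. To get $v>\varphi$ on $S$ one actually needs $(\epsilon/(n(n-2)))^{1/p}\ge\epsilon$, i.e.\ $\epsilon\le(n(n-2))^{-(n-2)/4}$, a strictly smaller threshold than $1$. The paper's terse ``with $\tau=\alpha=0$'' glosses over the coefficients $n(n-2)$ and $c_+$ as well; the discrepancy only shrinks the admissible $\epsilon$-range and is harmless for the downstream use, where one merely needs \emph{some} positive lower bound for $v$ near the origin. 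Second, the barrier $\varphi$ genuinely fails on the outer sphere when $R>1$: there $\varphi$ can be as large as $\epsilon(1+R^2)/2>\epsilon$, and the hypothesis $\epsilon<R^{(2-n)/2}$ does not repair this. That is precisely why the paper never tries the barrier directly on $B_R^+$. The threshold $R^{(2-n)/2}$ in the statement is nothing more than the condition $\epsilon'=R^{(n-2)/2}\epsilon<1$ that emerges when the $R=1$ statement is applied to $\bar v$ and translated back, i.e.\ the rescaling step you advocate in your final paragraph is not an alternative but the intended proof of the general-$R$ case.
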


\medskip
\begin{proof}
When $R=1$, this lemma can be proved as that of Lemma \ref{singular} with $\tau=\alpha=0$. For more general $R$, it follows  easily from
applying  the result for $R=1$ to $ \bar v(x) = R^{
(n-2)/2} v(Rx)$.
\end{proof}

\medskip
Next we will prove Theorem \ref{Class-critical} by the moving sphere method.

First we establish a proposition which makes it possible to start moving the spheres.

\smallskip
\begin{proposition}\label{lambda-infinite}
For $\lambda$ large enough, $ w_\lambda (x)\geq 0$ for all $x\in B_\lambda^+ \setminus \{0\}.$
\end{proposition}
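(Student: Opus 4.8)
The plan is to follow the classical "moving spheres start-up" argument, adapted to the mixed interior/boundary equation \eqref{kelvin-minus}. First I would record that for $\lambda > 0$ the function $w_\lambda$ solves \eqref{kelvin-minus} in $B_\lambda^+$ with $c_5, c_6 > 0$, so I am looking at a differential inequality of the form $-\Delta w_\lambda + c_5(x) w_\lambda \ge 0$ in $B_\lambda^+$ and $\partial_t w_\lambda \le -c_6(x) w_\lambda$ on the flat part of the boundary, together with the trivial identity $w_\lambda \equiv 0$ on the spherical cap $\partial B_\lambda^+ \cap \partial B_\lambda$. The obstruction to applying the maximum principle directly is the possible singularity of $v$ (hence of $v_\lambda$, hence of $w_\lambda$) at the origin: near $0$, $v_\lambda(x) = (\lambda^{n-2}/|x|^{n-2}) v(\lambda^2 x/|x|^2)$ is bounded (it sees $v$ near infinity, where $v(y)\sim |y|^{2-n}$ after Kelvin transform, so $v_\lambda$ stays bounded near $0$), whereas $v(x)$ could in principle blow up near $0$; so near the origin $w_\lambda = v - v_\lambda$ is in fact large and positive. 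This is exactly the content of Lemma \ref{singular-3}: for $R$ fixed and $\lambda$ large, $v(x)\ge \epsilon/2$ on $\overline{B_R^+}\setminus\{0\}$, while $v_\lambda(x)\le C\lambda^{2-n}$ there, so $w_\lambda > 0$ on $\overline{B_R^+}\setminus\{0\}$ once $\lambda$ is large.

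The second ingredient is the behavior away from the origin. On the region $\overline{B_\lambda^+}\setminus B_R^+$, I want $w_\lambda \ge 0$ for all large $\lambda$. Since $v$ is continuous and positive on the compact set $\overline{B_R^+}\setminus B_{R/2}^+$ (say), it is bounded below there by a positive constant; meanwhile $v_\lambda(x) = (\lambda^{n-2}/|x|^{n-2}) v(\lambda^2 x/|x|^2)$, for $x$ in a fixed annulus and $\lambda\to\infty$, involves $v$ evaluated at a point going to infinity, and using the decay $v(y) = O(|y|^{2-n})$ of the Kelvin-transformed solution one checks $v_\lambda(x)\to 0$ uniformly on that annulus. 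More globally, for $R \le |x| \le \lambda$ one estimates $v_\lambda(x) \le \lambda^{2-n}|x|^{n-2}\cdot \sup_{|y|\ge \lambda^2/|x|} v(y)$ and, combining with the explicit decay of $v$, bounds $v_\lambda$ by something that is $\le v(x)$ on the whole shell once $\lambda$ is large. So the plan is: (1) use Lemma \ref{singular-3} to get $w_\lambda > 0$ on $\overline{B_R^+}\setminus\{0\}$ for large $\lambda$; (2) a direct comparison of sizes to get $w_\lambda \ge 0$ on the shell $\{R \le |x| \le \lambda,\ t\ge 0\}$ for large $\lambda$; (3) conclude $w_\lambda \ge 0$ on $B_\lambda^+\setminus\{0\}$.

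As a clean alternative to (2)–(3) that also handles the boundary term cleanly, I would argue by contradiction: if $w_\lambda$ is negative somewhere in $\overline{B_\lambda^+}\setminus\{0\}$, then since $w_\lambda \equiv 0$ on the cap $\partial B_\lambda^+\cap\partial B_\lambda$, and $w_\lambda > 0$ near the origin by step (1), the negative infimum is attained at some interior point of $B_\lambda^+$ or on the open flat boundary piece. An interior minimum with $w_\lambda < 0$ contradicts $-\Delta w_\lambda + c_5 w_\lambda \ge 0$ via the maximum principle (at a negative interior minimum $\Delta w_\lambda \ge 0$ while $c_5 w_\lambda < 0$). A minimum on the flat boundary with $w_\lambda < 0$ forces $\partial_t w_\lambda \ge 0$ there, contradicting $\partial_t w_\lambda \le -c_6 w_\lambda$ with $c_6 > 0$ and $w_\lambda < 0$ — provided the minimum is not at the corner $\partial B_\lambda^+\cap\partial H$, which is excluded because along $\partial H$ near the cap $w_\lambda$ is close to its value $0$ on the cap while being strictly negative at the min, so the min is attained in the interior of the flat disk. (One has to be a touch careful that the infimum is actually attained and not just approached near $\{0\}$; that is precisely why step (1), giving a uniform positive lower bound on a neighborhood of the origin, is needed.)

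The step I expect to be the main obstacle is controlling $v_\lambda$ near the origin and on the outer shell — i.e., quantifying that $v_\lambda(x)$ is uniformly small compared to $v(x)$ for large $\lambda$ on $\overline{B_\lambda^+}\setminus\{0\}$. This rests on knowing the decay rate $v(y) = O(|y|^{2-n})$ of the Kelvin-transformed solution as $|y|\to\infty$, equivalently the boundedness of $u$ near a finite point, which follows because $u\in C^2(H)\cap C^1(\bar H)$; combined with Lemma \ref{singular-3} near $0$, this pins down $w_\lambda$ everywhere. Once those size estimates are in hand, the maximum-principle/Hopf argument above closes the proof.
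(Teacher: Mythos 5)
Your step (1) — using Lemma \ref{singular-3} to get $w_\lambda>0$ on $\overline{B_R^+}\setminus\{0\}$ once $\lambda$ is large — is exactly Step 3 of the paper's proof and is fine. But the argument on the complementary region $\{R\le|x|\le\lambda,\,t\ge0\}$ has two genuine gaps, and both are at the heart of the matter.

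First, the ``direct size comparison'' cannot close near the sphere $|x|=\lambda$. There the reflected point $\lambda^2 x/|x|^2$ is essentially $x$ itself, so $v_\lambda(x)$ and $v(x)$ are both $\approx u(0)/\lambda^{n-2}$ and are comparable to the same leading order; a crude magnitude estimate of the kind $v_\lambda(x)\le\lambda^{2-n}|x|^{n-2}\sup_{|y|\ge\lambda^2/|x|}v$ only reproduces $v_\lambda(x)\lesssim u(0)/\lambda^{n-2}$, which is the same size as $v(x)$. The inequality $w_\lambda\ge0$ near the cap $\partial B_\lambda$ is a genuine comparison-principle statement, not a size estimate; indeed it degenerates to $w_\lambda\equiv0$ on the cap itself.

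Second, the ``clean alternative'' contradiction argument has a sign error on the flat boundary, and this is precisely why the paper introduces the auxiliary function $\varphi_\lambda(x)=|y|^\beta w_\lambda(x)$ with $y=x+(0,\dots,0,\lambda/4)$. At a negative minimum of $w_\lambda$ on $\{t=0\}$, one has $\partial_t w_\lambda\ge0$ (inward derivative at an interior-facing minimum), while the boundary condition $\partial_t w_\lambda=-c_6(x)\,w_\lambda$ with $c_6>0$ and $w_\lambda<0$ gives $\partial_t w_\lambda>0$ as well. These are \emph{consistent}, not contradictory; the raw boundary inequality points the wrong way for the maximum principle. Even Hopf's lemma applied to $-\Delta w_\lambda+c_5w_\lambda=0$ only reinforces $\partial_t w_\lambda>0$, still consistent. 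The paper's fix is to pass to $\varphi_\lambda=|y|^\beta w_\lambda$, which satisfies $\partial_t\varphi_\lambda=\bigl(\tfrac{\beta\lambda}{4|y|^2}-c_6(x)\bigr)\varphi_\lambda$ on $\partial H$, and then to show $c_6(x_0)\le C/|x_0|^2$ on the annulus $\lambda/2\le|x_0|\le\lambda$ so that $\tfrac{\beta\lambda}{4|y_0|^2}-c_6(x_0)>0$ for $\lambda$ large. \emph{Then} a negative boundary minimum of $\varphi_\lambda$ forces $\partial_t\varphi_\lambda<0$, contradicting $\partial_t\varphi_\lambda\ge0$. In addition, the intermediate annulus $R_0\le|x|\le\lambda/2$ is handled separately by the Li--Zhu asymptotic/superharmonicity argument (the paper's Step 1). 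Both the auxiliary weight and the intermediate-annulus step are missing from your proposal, and without them the argument does not close.
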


%\smallskip
\begin{proof}\ We prove this proposition by three steps as that in \cite{LZ1995}.

Step 1. Similar to the proof of Proposition 2.1 in \cite{LZ1995},  we have: there exists $R_0>0$ large enough, such that for all $R_0\leq |x|\leq \lambda/2,$ $ w_\lambda (x)\geq 0.$

Step 2.  Let $R_1\geq R_0$ and $R_1\leq \lambda/2\leq|x|\leq \lambda.$  We claim that $ w_\lambda (x)\geq 0.$\\
 To see this, we define
 $$
   \varphi_\lambda (x)=|y|^\beta w_{\lambda}(x)
    $$
    for  $\beta \in (0, n-2),  \ y=x+(0,..., 0, \lambda/4).$  Easy to check that $\varphi_\lambda $ satisfies
\begin{equation}\label{proposition 3.2-1}
 \left\{
\aligned
 &  -\triangle\varphi_\lambda +\frac{2\beta}{|y|^2} y\cdot \nabla\varphi_\lambda  + (c_5(x)+\frac{\beta(n-2-\beta)}{|y|^2})\varphi_\lambda = 0 , \quad \text{in}~B_{\lambda}^+, \\
& \frac {\partial \varphi_\lambda}{\partial t}  = (\frac{\beta\lambda}{4|y|^2}- c_6(x))  \varphi_\lambda \ \ \ \text{ on} \ \
\partial B_\lambda^+\cap\partial H \setminus \{0\}.
\endaligned
\right.
 \end{equation}
If there exists $x_0$ with $\lambda/2\leq |x_0|\leq \lambda,$ such that  $\varphi_{\lambda}(x_0) = {\displaystyle \min_{ \lambda/2\leq |x|\leq \lambda } \varphi_{\lambda}(x)}
    <0,$ then from step 1 and the definition of $w_\lambda,$ we know $x_0 \notin \{|x|=\lambda/2\}\cup \{|x|=\lambda\}.$  The Maximum Principle yields that $x_0\in \partial H.$ As in the proof of  Proposition 2.1 in \cite{LZ1995}, we can show that $c_6(x_0)\leq C/|x_0|^2$ with $C>0.$ Moreover, for $\lambda$ large enough and $y_0=x_0+(0,..., 0, \lambda/4),$ we have
$$
\frac{\beta\lambda}{4|y_0|^2}-c_6(x_0)>0, \quad x_0\in \partial H.
$$
This contradicts to the boundary condition in \eqref{proposition 3.2-1}.

Step 3. We claim: there exists $R_2\geq R_1, $ such that for $\lambda \geq R_2,$  $ w_\lambda (x)\geq 0$ for $x\in B_{R_0}^+\setminus \{0\}.$

From the fact that ${\displaystyle \lim_{ |y|\rightarrow +\infty }|y|^{n-2}v(y)=u(0) },\ |\frac{\lambda^2x}{|x|^2}|\geq \frac{\lambda^2}{R_0}\geq \frac{R_2^2}{R_0}$ and Lemma \ref{singular-3}, we have
$$
w_{\lambda}(x) = v(x)-\frac{1}{\lambda^{n-2}}\big(\mid\frac {\lambda^2x}{|x|^2}\mid^{n-2} v( \frac {\lambda^2 x} {|x|^2})\Big)>\frac{\varepsilon}{2}-\frac{u(0)+\circ (1)}{\lambda^2}.
$$
So Step 3 follows easily as  $\lambda$ becomes large.  Proposition \ref{lambda-infinite} is proved.
\end{proof}

Now for any $b \in \partial H= \Bbb{R}^{n-1} $, we define
$$
\lambda_b=\inf\{\lambda>0|w_{\mu, b}(x)\geq 0\  \text{in} \  \overline{B_\mu^+}\setminus \{0\}\ \text{for all }\  \lambda<\mu<\infty\}.
$$
%\medskip
\begin{proposition}\label{lambda-notstop} Assume $ u(x) \in C^2(H) \cap C^1(\bar H)$ solves \eqref{elliptic-our}.  If  $\lambda_b=0$ for all $b \in \partial H= \Bbb{R}^{n-1}, $
then $c_+=n-2$ and $
u(x)=u(x', t)=u(0, t)=u(t)=(2t+u(0)^{-2/(n-2)})^{-(n-2)/2},$ for  all  $x=(x',t)\in H.
$% i. e.

%$$
%u(t)=\left\{
%\aligned
%& -{\int_0^t}{ \sqrt{(n-2)^2u^{\frac{2n}{n-2}}(s)+(c_+^2-(n-2)^2)u^{\frac{2n}{n-2}}(0)}}ds+u(0), \ \ 0\leq t < t_0,\\
%&\big(1-(\frac{c_+}{n-2})^2 \big)^{\frac{n-2}{2n}}u(0), \ \ t = t_0,\\
%& {\int_{t_0}^t}{ \sqrt{(n-2)^2u^{\frac{2n}{n-2}}(s)+(c_+^2-(n-2)^2)u^{\frac{2n}{n-2}}(0)}}ds+u(t_0), \ \ t > t_0,
%\endaligned
%\right.
%$$
%when $0<c_+<n-2;$ $ u( t)=
%(2t+u(0)^{-2/(n-2)})^{-(n-2)/2}$ when $c_+=n-2;$ $ u( t)=0$ when $c_+>n-2.$
\end{proposition}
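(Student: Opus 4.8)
The hypothesis $\lambda_b = 0$ for every $b \in \partial H$ means that for each center $b$ the moving-sphere process never stops: we can shrink the reflecting sphere all the way down to radius zero without the difference function $w_{\mu,b}$ ever becoming negative. The plan is to extract from this a pointwise functional identity. For a fixed $b$, since $w_{\mu,b}(x) \ge 0$ on $\overline{B_\mu^+}\setminus\{0\}$ for all $\mu > 0$, I would let $\mu \to 0^+$ and use the standard moving-sphere lemma (the calculus lemma of Li--Zhu type: if $h$ is continuous near a point and $h(x) \ge (\mu/|x-b|)^{n-2} h(b + \mu^2(x-b)/|x-b|^2)$ for all small $\mu$, then $h$ is constant near $b$, or more precisely $\liminf_{x\to b}|x-b|^{n-2}h(x) > 0$ forces a lower bound). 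Applying this at every $b$, together with the inversion-invariance built into the definition of $w_{\mu,b}$, forces $u_b$ (equivalently $u$) to satisfy an exact scaling relation under all Kelvin transforms centered on $\partial H$. Concretely, I expect to conclude that $u$ does not depend on $x'$: translating $b$ over $\mathbb{R}^{n-1}$ and comparing the resulting identities gives $u(x',t) = u(0,t) =: u(t)$.

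Once $u = u(t)$, equation \eqref{elliptic-our} with $p = \frac{n+2}{n-2}$, $q = \frac{n}{n-2}$ collapses to the ODE
\begin{equation*}
u''(t) = n(n-2)\,u(t)^{(n+2)/(n-2)},\qquad u'(0) = -c_+\,u(0)^{n/(n-2)},\qquad u>0,\ t>0 .
\end{equation*}
I would solve this exactly as in the proof of Theorem \ref{Class-noncritical}: the general positive solution on a half-line of $u'' = n(n-2)u^{(n+2)/(n-2)}$ that stays positive for all $t>0$ is $u(t) = (2t + \kappa)^{-(n-2)/2}$ for a constant $\kappa > 0$ (other solutions of the ODE blow up at finite positive $t$ or fail positivity). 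Plugging this into the boundary condition, $u'(0) = -(n-2)\kappa^{-n/2}$ and $-c_+ u(0)^{n/(n-2)} = -c_+\kappa^{-n/2}$, so the boundary condition holds if and only if $c_+ = n-2$, in which case $\kappa$ is free and equals $u(0)^{-2/(n-2)}$. This yields exactly $u(x',t) = (2t + u(0)^{-2/(n-2)})^{-(n-2)/2}$ and $c_+ = n-2$, as claimed.

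The main obstacle is the first paragraph: converting "$\lambda_b = 0$ for all $b$" into the rigidity statement $u = u(t)$. The delicate point is that the moving-sphere calculus lemma must be applied uniformly in $b$ and one must track how the limiting identity transforms under changing the center; the singularity of $v_b$ at the origin (handled by Lemma \ref{singular-3}) is what guarantees $w_{\mu,b}$ is well-behaved near $0$ so that the limit $\mu\to 0^+$ is legitimate. I would phrase it as: for each $b$, $\liminf_{x\to 0}|x|^{n-2}v_b(x) = u_b(0) = u(b,0)$ and the never-stopping condition forces $v_b$ to be, up to the inversion, translation-invariant in the $x'$ directions; de-inverting gives that $u(x',t)$ is independent of $x'$. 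The ODE step is then routine and parallels Section 2. One should also double-check the consistency constraint $0 < \varepsilon < ((n-2)^{-1}c_+)^{2/n}$ degenerates correctly when $c_+ = n-2$, i.e. that the radial family and the flat family meet exactly at the threshold, but that belongs to the complementary case $\lambda_b \ne 0$ treated after this proposition.
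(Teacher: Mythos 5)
Your proposal follows the same overall route as the paper: convert ``$\lambda_b=0$ for all $b$'' into the rigidity statement $u=u(t)$, then analyze the resulting ODE. The ODE analysis in your second paragraph is essentially right. With the first integral $(u')^2=(n-2)^2u^{2n/(n-2)}+C$ where $C=(c_+^2-(n-2)^2)u(0)^{2n/(n-2)}$, one checks that $C>0$ (i.e.\ $c_+>n-2$) forces $u$ to hit zero in finite time, $C<0$ (i.e.\ $c_+<n-2$) forces $u$ to have a minimum and then blow up in finite time, and only $C=0$ yields the global decreasing family $(2t+\kappa)^{-(n-2)/2}$. The paper runs this same case split, with somewhat more explicit integral estimates, and the conclusion $c_+=n-2$ with $\kappa=u(0)^{-2/(n-2)}$ is the same.

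The genuine weak spot is your first paragraph, which you yourself flag as ``the main obstacle.'' The lemma you gesture at is misstated: the Li--Zhu calculus lemma in force here is not a local statement of the form ``if the inequality holds for all small $\mu$ then $h$ is constant near $b$'' (and the $\liminf$ reformulation you offer is not what is needed either). What is actually used is Lemma~\ref{keylemma1} of the paper (Lemma~2.2 of \cite{LZ1995}): if $f\in C^1(H)$ satisfies $f_b(x)\ge (\lambda/|x|)^{n-2}f_b(\lambda^2x/|x|^2)$ for every $b\in\partial H$, every $\lambda>0$, and every $x\in B_\lambda^+$, then $f(x',t)=f(0,t)$ on all of $H$. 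The hypothesis $\lambda_b=0$ for all $b$ gives precisely this inequality for $u$ (after translating $w_{\mu,b}\ge 0$ on $B_\mu^+$ back through the Kelvin transform), so the conclusion $u(x',t)=u(0,t)$ follows by citation rather than by the heuristic de-inversion argument you sketch. Replacing your first paragraph by a direct appeal to that lemma would make the proof complete; as written, the hand-waved step is where the argument is not yet a proof.
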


%{\bf Wang Lei, you SHALL understand why I made the change here. 5-26-2019}

%\medskip

In order to prove  Proposition \ref{lambda-notstop}, we need the following technical Li-Zhu lemma which appeared first  in \cite[Lemma 2.2]{LZ1995}.

\begin{lemma}\label{keylemma1} (Lemma~$2.2$~ in~\cite{LZ1995}) Suppose $f\in C^1(H)$  satisfies:  for any  $b\in \partial H,\ \lambda>0,$
\begin{equation}\label{LZ-1}
f_b(x)-\frac {\lambda^{n-2}}{|x|^{n-2}} f_b\Big( \frac {\lambda^2 x} {|x|^2}\Big)\geq 0, \quad \forall x\in B_\lambda^+,
\end{equation}
where $f_b(x)=f(x'+b,t), \forall x=(x',t)\in H.$ Then
$f(x)=f(x',t)=f(0,t)$ for all $x\in H.$
\end{lemma}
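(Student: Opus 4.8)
The plan is to reduce the functional inequality \eqref{LZ-1} to a statement about a single real variable and then invoke a one-dimensional rigidity argument. First I would fix an arbitrary point $x=(x',t)\in H$ and an arbitrary unit vector $e\in\partial H$, and study the function $g(s)=f(x'+se,t)$ for $s\in\mathbb R$; the goal is to show $g$ is constant, since then $f$ cannot depend on the tangential variables and $f(x',t)=f(0,t)$ follows. The key observation is that \eqref{LZ-1}, which compares $f_b$ with its Kelvin reflection about the sphere $\partial B_\lambda$ centered at $b$, when restricted to a horizontal line through $b$ in the direction $e$ becomes exactly the statement that the \emph{one-dimensional} Kelvin-type reflection of $g$ about the point $b\cdot e$ at radius $\lambda$ dominates $g$. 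More precisely, writing $b=(b',0)$ and taking $x$ on the line $\{(b'+re,0):r\in\mathbb R\}$ one gets, for every choice of center and radius, an inequality of the shape
\begin{equation*}
g(r)\ge \Big(\frac{\lambda}{|r-c|}\Big)^{n-2} g\Big(c+\frac{\lambda^2}{r-c}\Big)
\end{equation*}
for all $r$ with $|r-c|>\lambda$ (here $c$ is the coordinate of the center and I have used that on this line $|x|=|r-c|$ after translating so the center is at the origin).

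Next I would exploit that this holds for \emph{all} centers $c$ and radii $\lambda$. The standard Li--Zhu trick is a calculus lemma: if $g\in C^1(\mathbb R)$ and for every pair $(c,\lambda)$ the above inequality holds, then $g$ must be constant (indeed, the only way a smooth positive function can dominate all of its one-dimensional conformal reflections is to be a constant; a non-constant $g$ would, after choosing the center and radius appropriately near a point where $g'\ne 0$, violate the inequality to first order). I would carry this out by a derivative computation: differentiate the inequality in $\lambda$ at $\lambda=|r-c|^-$ (the reflection is the identity there, so both sides agree) and conclude that a certain combination of $g(r)$ and $g'(r)$ must have a sign for all $c$; letting $c$ range then forces $g'\equiv 0$. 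This is precisely the argument of \cite[Lemma~2.2]{LZ1995}, and since the lemma is quoted verbatim from that paper I expect the write-up here to be short, either citing \cite{LZ1995} directly or reproducing the half-page calculation.

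The main obstacle, such as it is, is bookkeeping rather than conceptual: one must check that restricting the $n$-dimensional sphere reflection to a horizontal line through the center $b$ really does produce the one-dimensional reflection with the same radius (this uses crucially that $b\in\partial H$ and that the line lies in $\partial H$, so that $|x|$ along the line is just the distance to the center), and that the hypothesis \eqref{LZ-1} is available for \emph{every} center $b$ and \emph{every} $\lambda>0$, which is exactly what is assumed. Once the reduction is in place, the one-variable rigidity step is routine. I would therefore organize the proof as: (1) reduce to the horizontal line through an arbitrary $b$; (2) observe the restricted inequality is the $1$-D conformal reflection inequality for all $(c,\lambda)$; (3) apply (or reprove) the $1$-D rigidity lemma to conclude $g$ is constant; (4) conclude $f(x',t)=f(0,t)$ since $b$ and $e$ were arbitrary.
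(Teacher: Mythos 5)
The paper itself offers no proof of Lemma~\ref{keylemma1}; it is quoted verbatim from \cite[Lemma~2.2]{LZ1995} and used as a black box, so the comparison is really between your sketch and the argument in \cite{LZ1995}.

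Your reduction to a one--dimensional problem contains a genuine gap, not merely bookkeeping. The inversion $x \mapsto (b,0) + \lambda^2 (x-(b,0))/|x-(b,0)|^2$ centered at the boundary point $(b,0)$ does \emph{not} preserve a horizontal slice $\{t=\text{const}>0\}$: the $t$--coordinate of the image is $\lambda^2 t/|x-(b,0)|^2$, which varies with $x$. Thus the restriction of \eqref{LZ-1} to a horizontal line at height $t>0$ is not an inequality relating $g(r)=f(x'+re,t)$ to $g$ at another point of that same line, and the one--variable Li--Zhu rigidity lemma cannot be invoked. Your write-up slides tacitly from the line at height $t$ to the line $\{(b'+re,0)\}\subset\partial H$ (where the inversion does restrict, since the center lies on it), but that would at best yield rigidity of $f|_{\partial H}$, whereas the conclusion $f(x',t)=f(0,t)$ concerns all $t>0$. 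There is also a sign slip: the hypothesis in \eqref{LZ-1} is for $x\in B_\lambda^+$, i.e.\ $|x|<\lambda$, so the restricted inequality, where it does make sense, holds for $|r-c|<\lambda$, not $|r-c|>\lambda$.

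The actual Li--Zhu argument does not reduce dimension; instead it sends $\lambda\to\infty$ with the center $(b,0)$ receding to infinity in such a way that the inversion spheres flatten to vertical hyperplanes. Concretely, fix a unit vector $e'\in\partial H$ and $a\in\mathbb{R}$, and set $b_R=(a-R)e'$, $\lambda_R=R$. As $R\to\infty$ the sphere $\partial B_R((b_R,0))$ converges to the hyperplane $\{x'\cdot e'=a\}$, the inversion converges locally uniformly to the orthogonal reflection about that hyperplane, and the conformal factor $(\lambda_R/|x-(b_R,0)|)^{n-2}\to 1$. Passing to the limit in \eqref{LZ-1} yields $f(x)\ge f(\hat{x})$ whenever $x'\cdot e'<a$, where $\hat{x}$ is the reflected point; running the same argument with $-e'$ in place of $e'$ gives the reverse inequality, so $f$ is invariant under every vertical reflection and hence $f(x',t)=f(0,t)$. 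This limiting/flattening step is the key idea missing from your proposal; the ``reduce to one dimension and differentiate at the critical radius'' pattern you describe is the right tool for other Li--Zhu lemmas on $\mathbb{R}^{n-1}$, but not for this one.
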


{\bf Proof of Proposition \ref{lambda-notstop}.}\ If $\lambda_b=0$ for all $b \in \partial H= \Bbb{R}^{n-1}, $ then $u(x', t) \in C^1(\bar H)$ satisfies \eqref{LZ-1}. We know from Lemma \ref{keylemma1} that  $u(x)=u(x',t)=u(0,t)=u(t),$ thus
$$
\left\{
\aligned
&u''(t)  =n(n-2) u^p(t), \ \ u > 0 ,\ \  t>0 ,\\
& u'(0) = -c_+ u^q(0).
\endaligned
\right.
$$
%where $ p=\frac{n+2}{n-2},  q= \frac{n}{n-2}$ and $c_+>0.$
It is easy to see that
$$
\big(u'(t) \big )^2 =(n-2)^2u^{{2n}/{(n-2)}}(t)+\big(c_+^2-(n-2)^2\big)u^{{2n}/{(n-2)}}(0).
$$

When $c_+=n-2,$ $u(t)$ satisfies
$$
\left\{
\aligned
&\big(u'(t) \big )^2 =(n-2)^2u^{{2n}/{(n-2)}}(t), \ \ u > 0 ,\ \  t>0 ,\\
& u'(0) = -(n-2) u^{{2n}/{(n-2)}}(0),
\endaligned
\right.
$$
We then obtain $u( t)=
(2t+u(0)^{-2/(n-2)})^{-(n-2)/2}.$

When $c_+> n-2,$ it holds
$$u'(t)< -\sqrt{ (c_+^2-(n-2)^2)u^{{2n}/{(n-2)}}(0)},$$
then there is no global positive solution.

When $0<c_+<n-2,$ since $u''(t)>0$, we know that $u(t)$ satisfies
$$
u'(t)=\left\{
\aligned
& - \sqrt{(n-2)^2u^{\frac{2n}{n-2}}(t)+(c_+^2-(n-2)^2)u^{\frac{2n}{n-2}}(0)}, \ \ 0\leq t < t_0,\\
& \, 0, \quad \quad \quad \quad\quad \quad\quad \quad \quad \quad\quad \quad\quad \quad\quad \quad\quad \quad\quad \quad  t = t_0,\\
&  \sqrt{(n-2)^2u^{\frac{2n}{n-2}}(t)+(c_+^2-(n-2)^2)u^{\frac{2n}{n-2}}(0)},  \quad \quad t > t_0,
\endaligned
\right.
$$ for some positive $t_0$. This means $u(t)\rightarrow+\infty$ as $t\rightarrow+\infty$. Thus there exists $t_1>t_0$, such that for $t>t_1$, $(n-2)^2u^{\frac{2n}{n-2}}(t)-((n-2)^2-c_+^2)u^{\frac{2n}{n-2}}(0)>\frac{1}{2}(n-2)^2u^{\frac{2n}{n-2}}(t).$
% $(n-2)^2u^{\frac{2n}{n-2}}(t)>2((n-2)^2-c_+^2)u^{\frac{2n}{n-2}}(0)$.
So, we have that for $t>t_1$,
\begin{eqnarray*}
	t-t_1&=&\int_{t_1}^{t}\frac{u'(s)ds}{ \sqrt{(n-2)^2u^{\frac{2n}{n-2}}(s)+(c_+^2-(n-2)^2)u^{\frac{2n}{n-2}}(0)}}\\
	&=&\int_{u(t_1)}^{u(t)}\frac{du}{ \sqrt{(n-2)^2u^{\frac{2n}{n-2}}+(c_+^2-(n-2)^2)u^{\frac{2n}{n-2}}(0)}}\\
	&\leq&\int_{u(t_1)}^{u(t)}\frac{\sqrt{2}du}{(n-2)u^{\frac{n}{n-2}}}\\
	&=&\frac{\sqrt{2}}{2}u^{-2/(n-2)}(t_1)-\frac{\sqrt{2}}{2}u^{-2/(n-2)}(t),
\end{eqnarray*}
which impies
$$u^{-2/(n-2)}(t)\leq u^{-2/(n-2)}(t_1)-\sqrt{2}(t-t_1).$$
But as $t\rightarrow +\infty,$ $ u^{-2/(n-2)}(t_1)-\sqrt{2}(t-t_1)\rightarrow -\infty$,
contradiction! So there is no positive global solution.
We hereby complete the proof of Proposition \ref{lambda-notstop}.

\medskip

Next, we consider the other possibility, that is,  $\lambda_b>0$ for some $b \in \partial H= \Bbb{R}^{n-1}. $

\begin{proposition}\label{lambda-stop}  Assume $ u(x) \in C^2(H) \cap C^1(\bar H)$ solves \eqref{elliptic-our}.
If  $\lambda_b>0$ for some $b \in \partial H= \Bbb{R}^{n-1}, $ then
$$
w_{\lambda_b,  b}(x)\equiv 0, \quad \forall x=(x',t)\in H.
$$
\end{proposition}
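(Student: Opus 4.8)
The plan is to argue by contradiction: suppose $w_{\lambda_b,b}\not\equiv 0$ on $B_{\lambda_b}^+$. By definition of $\lambda_b$ as an infimum, $w_{\mu,b}\ge 0$ in $\overline{B_\mu^+}\setminus\{0\}$ for all $\mu>\lambda_b$, and by continuity $w_{\lambda_b,b}\ge 0$ in $\overline{B_{\lambda_b}^+}\setminus\{0\}$ as well. Since $w_{\lambda_b,b}$ satisfies the linear elliptic differential inequality $-\Delta w_{\lambda_b,b}+c_5 w_{\lambda_b,b}=0$ in $B_{\lambda_b}^+$ with the oblique boundary condition $\partial_t w_{\lambda_b,b}=-c_6 w_{\lambda_b,b}$ on $\partial B_{\lambda_b}^+\cap\partial H\setminus\{0\}$ (this is \eqref{kelvin-minus} centered at $b$), the Strong Maximum Principle together with the Hopf Lemma forces $w_{\lambda_b,b}>0$ in the interior and on the open flat boundary portion, and gives a strict inward normal derivative on the spherical part $\partial B_{\lambda_b}^+\cap H$ where $w_{\lambda_b,b}$ vanishes. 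I would handle the possible singularity at the origin exactly as in Lemma \ref{singularity-2}: construct an auxiliary barrier of the type $\varphi(x)=\frac{\epsilon}{2}-\frac{r^{n-2}\epsilon}{|x|^{n-2}}+\frac{\epsilon t^2}{2}$ on $B_{r_0}^+\setminus B_r^+$, compare with $w_{\lambda_b,b}$, and send $r\to0$, to conclude $w_{\lambda_b,b}\ge\gamma>0$ on $B_{r_0}^+\setminus\{0\}$ for some small $r_0$ and $\gamma$.

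Next I would show that $\lambda_b$ cannot be a genuine infimum unless $w_{\lambda_b,b}\equiv 0$, i.e., I would derive from the strict positivity above that the moving sphere can be pushed strictly past $\lambda_b$, contradicting the definition. Concretely, pick a sequence $\mu_k\uparrow\lambda_b$... no — rather $\mu_k$ with $\mu_k<\lambda_b$ (or a sequence of radii between $\lambda_b$ and $\lambda_b-\delta$) along which $\inf w_{\mu_k,b}<0$; this sequence exists precisely because $\lambda_b$ is the infimum and $w_{\lambda_b,b}\not\equiv0$. One then argues: (i) $w_{\mu_k,b}(x)\to 0$ as $|x|\to\infty$; (ii) by the singularity estimate $w_{\mu_k,b}>\gamma/2$ near the origin for $k$ large; (iii) hence the infimum is attained at an interior or flat-boundary point $x^k$ with $r_0\le|x^k|\le R_0$, where the uniform upper bound $R_0$ comes from a test-function/Kelvin-type argument analogous to Lemma \ref{neg-min-point} using $\alpha$-type positivity of the boundary coefficient (here one uses $q=\frac{n}{n-2}$, equivalently $\alpha=0$, and the structure of $c_6$); (iv) the Maximum Principle rules out interior points, so $x^k\in\partial H$; (v) passing to a limit $x^k\to x^0\in\overline{B_{\lambda_b}^+}\cap\partial H$ with $w_{\lambda_b,b}(x^0)=0$. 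By the Hopf/strong-maximum dichotomy established in the previous paragraph, $x^0$ must lie on the spherical cap $|x^0|=\lambda_b$, and there $\partial_\nu w_{\lambda_b,b}(x^0)<0$ strictly (inward). On the other hand the minimizing property of $x^k$ gives a sign condition on the normal derivative of $w_{\mu_k,b}$ at $x^k$ that survives in the limit with the opposite sign — a contradiction. Hence $w_{\lambda_b,b}\equiv 0$.

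The main obstacle is the same one that pervades this circle of arguments: controlling the behavior near the origin and at infinity simultaneously, since $v_b$ is a Kelvin transform with no a priori decay, and in the critical case the sphere can stop at any $\lambda_b>0$, so one has no quantitative margin. The two delicate sub-steps are (a) the uniform bound $|x^k|\le R_0$ for negative minima, which requires choosing a suitable comparison function $\phi$ (a multiple of a standard bubble $\big(\sigma/(|x'|^2+(t+\sigma)^2)\big)^{(n-2)/2}$ with $\sigma$ large relative to the constant $\gamma_0$ bounding $|x|^{n-2}v(x)$), and verifying that the transformed boundary coefficient $\bar c(x)$ is positive when $w_{\mu_k,b}(x)<0$ — this is the place the critical exponent $q=\frac n{n-2}$ is essential, cf. Ou \cite{Ou1996} — and (b) making the Hopf-lemma comparison at the corner point, where the spherical cap meets $\partial H$, rigorous; this is the analogue of Lemma \ref{normal-derivative-0} and needs a tailored barrier on a half-ball-like region adapted to the sphere rather than the plane. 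Once these are in place, the contradiction and hence $w_{\lambda_b,b}\equiv 0$ follow by the scheme above, in complete parallel with the proof of Proposition \ref{lamubda=0}.
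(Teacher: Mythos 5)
Your skeleton is right (strong maximum principle and Hopf give strict positivity and a strict inward normal derivative on $\partial B_{\lambda_b}^+\cap H$; a barrier near the origin kills the singularity; a sequence $\lambda_k\uparrow\lambda_b$ with $\inf w_{\lambda_k,b}<0$ produces negative minimum points $x^k$ on the flat boundary; a corner Hopf lemma of the type of Lemma~\ref{normal-derivative-0} finishes), but two steps of your plan would fail and betray a confusion between the moving-plane and moving-sphere geometries.

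First, your step (i)--(iii) invoking decay at infinity and a uniform bound $|x^k|\le R_0$ via a Lemma~\ref{neg-min-point}-type test function is both unnecessary and unavailable. Here $w_{\lambda_k,b}$ lives on the \emph{bounded} region $\overline{B_{\lambda_k}^+}$, not on an unbounded half-slab $\Sigma_\lambda$: there is no ``$|x|\to\infty$'' to worry about, and compactness is automatic once the singularity is excluded (the paper simply notes $x^k\in\overline{B_{\lambda_k}^+}\setminus B_{\lambda_0/2}^+$). Moreover, the test-function argument of Lemma~\ref{neg-min-point} uses $\alpha=n-q(n-2)>0$ in an essential way to make $\bar c(x)>0$; in the critical case $\alpha=0$, so appealing to ``the structure of $c_6$'' in its place is not a proof but a gap. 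In short, that machinery is the wrong tool and would not run here.

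Second, your final contradiction is misidentified. At a flat-boundary minimum $x^k$ the information you get is that the \emph{tangential} gradient $\nabla_{x'}w_{\lambda_k,b}(x^k)=0$ (and $\partial_t w_{\lambda_k,b}(x^k)\ge 0$); passing to the limit $x^k\to x^0$ gives $\nabla_{x'}w_{\lambda_b,b}(x^0)=0$. The key geometric observation — which your write-up omits — is that at a corner point $x^0=(x_0',0)$ with $|x_0'|=\lambda_b$, the inner normal $\nu$ of the sphere lies in the $x'$-plane, so $\partial_\nu w_{\lambda_b,b}(x^0)=0$. That zero is what contradicts the strict positivity $\partial_\nu w_{\lambda_b,b}(x^0)>0$ from the corner Hopf lemma (note also the sign: for a nonnegative function vanishing at a boundary point, Hopf gives the inward normal derivative $>0$, not $<0$ as you wrote). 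Your phrase ``a sign condition on the normal derivative of $w_{\mu_k,b}$ at $x^k$ that survives in the limit with the opposite sign'' conflates the $t$-normal to $\partial H$ at $x^k$ with the sphere-normal $\nu$ at $x^0$; as stated it does not yield a contradiction.
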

\begin{proof}
From the properties of the Kelvin transformation, we only need to prove this proposition for $x\in B_{\lambda_b}^+(b).$ Without loss of generality, we assume $b=0.$ Suppose the contrary to Proposition \ref{lambda-stop},  then
$w_{\lambda_0}\not \equiv 0$ satisfies
\begin{equation}\label{proposition 3.5-1}
\left\{
\aligned
 - \Delta w_{\lambda_0} + c_5(x)  w_{\lambda_0} =0  \ \ & \text{ in} \ \ B_{\lambda_0}^+, \\
\frac {\partial w_{\lambda_0}}{\partial t}  = - c_6(x)  w_{\lambda_0}\ \ \ \ \ \ & \text{ on} \ \
\partial B_{\lambda_0}^+\cap\partial H \setminus \{0\},\\
w_{\lambda_0}\geq 0 \ \ \  \ \ \ \ \ \ \ \ \ \  \ \ \ \ \ &\text{ in} \ \ {\overline{B_{\lambda_0}^+}}\setminus \{0\}.
\endaligned
\right.
\end{equation}
where $ c_5(x),  c_6(x) >0$ are the same as that in \eqref{kelvin-minus}. It follows from the Strong Maximum Principle and the Hopf lemma that
 \begin{equation}\label{proposition 3.5-2}
\left\{
\aligned
w_{\lambda_0}(x) > 0 , \ \   \ \ \ \ \ & x\in \overline{B_{\lambda_0}^+} , \, 0<|x|<\lambda_0 , \\
\frac {\partial w_{\lambda_0}}{\partial \nu} (x) >0,\ \ \ \ \ \ & x\in \partial B_{\lambda_0}^+\cap H ,
\endaligned
\right.
\end{equation}
where $\nu$ denotes the inner normal of the sphere $\partial B_{\lambda_0}.$ The following lemma is needed to deal with the possible singular point, and we
postpone its proof till the end of the proof of the proposition.

\medskip
\begin{lemma}\label{singularity-4}\ There exists a positive constant $\gamma=\gamma (\lambda_0)>0 $ such that $ w_{\lambda_0}(x) >
    \gamma$ for $x\in {\overline{B_{{\lambda_0}/2}^+}} \setminus\{ 0\} .$
\end{lemma}

\medskip
We continue the proof of Proposition \ref{lambda-stop}. From the definition of $\lambda_0,$ we know that there exists a sequence $\lambda_k\rightarrow \lambda_0$ with $\lambda_k< \lambda_0,$ such that
$$
\inf_{ {\overline{B_{\lambda_k}^+}}\setminus \{0\}} w_{{\lambda_k}}<0.
$$
From Lemma \ref{singularity-4} and the continuity of $ w_{\lambda}$ away from the origin, it follows that for $k$ large enough, there exists $x^k=((x^k)', t^k)\in {\overline{B_{\lambda_k}^+}}\setminus B_{{\lambda_0}/2}^{+}$ such that
\begin{equation}\label{proposition 3.5-3}
w_{\lambda_k}( x^k)=  \min_{
{\overline {B_{\lambda_k}^+}} \setminus  \{0\}}w_{\lambda_k}( x)  <0.
\end{equation}
It is clear that $\frac{\lambda_0}{2}<|x^k|<\lambda_k$, And, due to the Strong Maximum Principle, $t^k=0.$ Hence, $\frac{\partial w_{\lambda_k}}{\partial x_i}( x^k)=0 \ (i=1, \cdots, n-1)$ and $\frac{\partial w_{\lambda_k}}{\partial t}( x^k)\geq 0.$ After passing to a subsequence (still denoted as $x^k$), $x^k\rightarrow x_0=(x_0', 0).$ It follows that
\begin{equation}\label{proposition 3.5-4}
w_{\lambda_0}( x_0)=0,\ \ \frac{\partial w_{\lambda_0}}{\partial x_i}( x_0)=0, \ (i=1, \cdots, n-1).
\end{equation}
We know from \eqref{proposition 3.5-2} and \eqref{proposition 3.5-4} that $|x_0'|=\lambda_0.$

The following lemma is needed to handle the normal derivative, and we
postpone its proof till the end.

\medskip
\begin{lemma}\label{normal-derivative}\ Suppose \eqref{proposition 3.5-1} and \eqref{proposition 3.5-2} hold, then $\frac {\partial w_{\lambda_0}}{\partial \nu} (x_0) >0$ for all $ x_0=(x_0', 0)$ and $|x_0'|=\lambda_0.$
\end{lemma}

\medskip
From Lemma \ref{normal-derivative}, we reach a contradiction due to \eqref{proposition 3.5-4}.

We are left to prove above two lemmas

\smallskip

\noindent{\bf Proof of Lemma \ref{singularity-4}.}\ Using \eqref{proposition 3.5-2}, we have $\min_{{\partial B_{{\lambda_0}/2}^+}\cap {\partial B_{{\lambda_0}/2}}} w_{{\lambda_0}}\geq \varepsilon$ for some $0<\varepsilon <1.$  Without loss of generality, we assume $\lambda_0=2.$ For $0<r<1$ and $x\in {B}_1^{+} \setminus B_r^{+},$ let $\varphi (x)$ be the same function given  in \eqref{lemma-2.1-0},
and
  $ F_3(x)= w_{\lambda_0}(x)- \varphi (x)$. Then $ F_3(x)$ satisfies
\begin{equation}\label{lemma-singularity-4-1}
  \left\{
  \aligned
   & \Delta F_3 = n(n-2)(v^p -v_{\lambda_0}^p)-
   \epsilon \ \ \ \text{in} \ \
   B_{1}^{+} \setminus
   {B}_r^{+},   \\
    &\frac {\partial F_3}{ \partial t} =- c_+ (v^q -v_{\lambda_0}^q)  \ \ \ \   \ \ \  \text{on} \ \ \partial
    ({{B}_{1}^{+}} \setminus { B_r^{+}})
    \cap \partial H.
    \endaligned
    \right.
\end{equation}
If $n(n-2)(v^p -v_{\lambda_0}^p)-
   \epsilon \geq 0,$ we have proved Lemma \ref{singularity-4} with $\gamma=\frac{\varepsilon}{n(n-2)C}$, $C=\max_{ B_1^{+}\setminus B_r^{+}} v>0$ and $r\rightarrow 0.$
Otherwise, it follows from \eqref{lemma-singularity-4-1} that
\begin{equation}\label{lemma-singularity-4-2}
  \left\{
  \aligned
   & \Delta F_3 <0 \ \ \ \text{in} \ \
   B_{1}^{+} \setminus
   {B}_r^{+},   \\
    &\frac {\partial F_3}{ \partial t} <0 \ \ \  \text{on} \ \ \partial
    ({{B}_{1}^{+}} \setminus { B_r^{+}})
    \cap \partial H.
    \endaligned
    \right.
\end{equation}
Now, we will show
    \begin{equation}\label{lemma-singularity-4-3}
    F_3 \ge 0 \ \ \ \text{in}\ \
    \overline{B_1^+}\setminus B_r^{+}.
   \end{equation}
On $ \partial B_r^{+}\cap  \partial B_r$,
 $ F_3 > w_{\lambda_0} > 0$; on $ \partial B_1^{+}
\cap  \partial B_1$, $ F_3\geq  \varepsilon- ( \frac
{\epsilon}{2} - r^{n-2} {\epsilon} + \frac {\epsilon
}2t^2) >  0$. Suppose
that \eqref{lemma-singularity-4-3} fails,  it follows
from the Strong Maximum Principle that  there exists some $ x_0=( x_0', 0)$
with $r<| x_0'|<1$ such that
$$
F_3( x_0)=  \min_{
\overline{B_1^+ } \setminus { B_r^{+}\cap \partial H}} F_3  <0,
$$
which contradicts  to the  boundary condition of \eqref{lemma-singularity-4-2}.
This establishes \eqref{lemma-singularity-4-3}.
Sending $ r \to 0$, we obtain Lemma \ref{singularity-4}.

%{\bf I am here 5-26, plan for my trip now}

\smallskip
\noindent{\bf Proof of Lemma \ref{normal-derivative}.}
Without loss of generality, we assume $\lambda_0=1.$ Set $\Omega_2=\{x=(x', t)|x\in {B_1^{+}}\setminus{\overline{ B_{1/2}^{+}}}, t<a\}$ with $0<a\leq 1/2$ to be chosen later.
Due to the continuity of $v$ in $H\setminus \{0\},$ we know that:  there exists some positive constant $C_1$ such that
 \begin{equation}\label{normal-derivative-1}
    v_{\lambda_0}(x)<C_1<+\infty \ \ \ \text{for}\ \
    x\in \overline{\Omega_2}.
   \end{equation}
Let
$$
h_2(x)=\varepsilon ( \frac{1}{|x|^{n-2}}-1)(\frac{t^2 \mu}{2}+ \frac{1-\mu}{(2|x|)^{n-2}} +1-\mu),\ \ \  x\in \Omega_2,
$$
where $0<\varepsilon,\mu<1$ will be chosen later. Let $A_2(x)= w_{\lambda_0}(x)- h_2(x),$ then $A_2(x)$ satisfies the following equation.
\begin{equation}\label{normal-derivative-2}
  \left\{
  \aligned
   \Delta A_2(x) &=c_5(x) w_{\lambda_0}(x)- \Delta h_2(x) \quad\quad \text{in} \ \Omega_2\\
    \frac {\partial A_2}{ \partial t} &=- c_6(x) w_{\lambda_0}(x)\leq 0   \quad\quad  \text{on} \ \
   \partial {\Omega_2}  \cap \partial H
    \endaligned
    \right.
\end{equation}
where $c_5(x)$ and $c_6(x)$ are given in \eqref{kelvin-minus}, and
\begin{equation}\label{normal-derivative-2-1}
\Delta h_2(x)=\epsilon [ \mu(\frac{1}{|x|^{n-2}}-1)+2(n-2)(\frac{(1-\mu)(n-2)}{2^{n-2}|x|^{2n-2}}-\frac{\mu t^2}{|x|^n})].
\end{equation}

For suitably chosen $\varepsilon$ and $\mu,$ we want to show
\begin{equation}\label{normal-derivative-3}
A_2(x)= w_{\lambda_0}(x)- h_2(x)\geq 0, \quad \forall x \in \Omega_2.
\end{equation}
Using \eqref{proposition 3.5-2}, we can choose $\varepsilon_0>0$ small enough, such that for all $0<\varepsilon<\varepsilon_0,$ we have $A_2(x)\geq 0$ on $\partial {\Omega_2} \cap \{\partial B_{1/2}\cup \{ t=a\}\}.$ Also, from the construction of $h_2$ we know $A_2(x)=0$ on $\partial {\Omega_2} \cap \partial B_1.$ Suppose the contrary to \eqref{normal-derivative-3}, then there exists some $\bar{x}=(\bar{x}', \bar{t})\in \overline{\Omega}$ such that
\begin{equation}\label{normal-derivative-4}
A_2(\bar{x})= \min_{\overline{\Omega}_2}A_2< 0.
\end{equation}
From the above and the boundary condition of \eqref{normal-derivative-2}, we have $0<\bar{t}<a,\ 1/2 < |\bar{x}|<1.$ Thus
\begin{equation}\label{normal-derivative-5}
\Delta A_2(\bar{x})\geq 0.
\end{equation}
From \eqref{normal-derivative-4}, we have $v(\bar{x})-v_{\lambda_0}(\bar{x})-h_2(\bar{x})<0,$ and then
\begin{equation}\label{normal-derivative-6}
v(\bar{x})<C_2<+\infty
\end{equation}
for some constant $C_2$ depending only on $C_1.$ By \eqref{normal-derivative-1} and \eqref{normal-derivative-6} we have $c_5(x)<C_3$ for some positive constant $C_3$ depending only on $C_1, C_2$ and $\lambda_0.$ Hence, it follows from \eqref{normal-derivative-2}, \eqref{normal-derivative-2-1} and \eqref{normal-derivative-5} that
\begin{equation}\label{normal-derivative-7}
C_3 w_{\lambda_0}(\bar{x})\geq c_5(\bar{x}) w_{\lambda_0}(\bar{x})>\varepsilon \mu (\frac{1}{|x|^{n-2}}-1),
\end{equation}
for $\bar{t}<a$ if we choose   $0<a \leq \big((1-\mu)\mu^{-1}2^{2-n}(n-2)\big)^{1/2}$ such that $\frac{(1-\mu)(n-2)}{2^{n-2}|x|^{2n-2}}-\frac{\mu t^2}{|x|^n}>0.$
Again by \eqref{normal-derivative-4},
\begin{equation}\label{normal-derivative-8}
  \aligned
  w_{\lambda_0}(\bar{x}) < h_2(\bar{x})& < \varepsilon (\frac{1}{|x|^{n-2}}-1)[\frac{\mu \bar{t}^2}{2}+2(1-\mu)]\\
    & <\varepsilon (\frac{1}{|x|^{n-2}}-1)[\frac{\mu }{2C_3}+2(1-\mu)],
    \endaligned
\end{equation}
for  $\bar{t}<a$ if we choose $0<a \leq {C_3}^{-1/2}.$ Therefore,  if we choose $a\leq \min \{ \big((1-\mu)\mu^{-1}2^{2-n}(n-2)\big)^{1/2},  {C_3}^{-1/2}, 1/2\}$ such that both \eqref{normal-derivative-7} and \eqref{normal-derivative-8} hold, then $$
C_3 \varepsilon (\frac{1}{|x|^{n-2}}-1)[\frac{\mu }{2C_3}+2(1-\mu)]> \varepsilon \mu (\frac{1}{|x|^{n-2}}-1);
$$
that is, $\mu<\frac{4 C_3}{1+4C_3}.$ If $\mu$ is chosen in such a way that $\mu>\frac{4 C_3}{1+4C_3}$ from the beginning, we reach a contradiction. Thus \eqref{normal-derivative-3} holds. Since we also know that $A_2(x_0)=0,$ we have
$$
\frac {\partial A_2}{\partial \nu} (x_0)\geq 0.
$$
It follows from a direct computation that
$$
\frac {\partial w_{\lambda_0}}{\partial \nu}(x_0) =\frac {\partial A_2}{\partial \nu} (x_0)+\frac {\partial h_2}{\partial \nu} (x_0) \geq \frac {\partial h_2}{\partial \nu} (x_0)=\varepsilon (n-2)(2^{2-n}+1)(1-\mu) >0.
$$
Hence, Lemma \ref{normal-derivative} is established.

We hereby complete the proof of the proposition.

\end{proof}

\medskip
\begin{proposition}\label{lambda-stop-cor}
Suppose $\lambda_b>0$ for some $b \in \partial H, $ then we have
$\lambda_b>0$ for all $b \in \partial H. $
\end{proposition}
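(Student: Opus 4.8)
The plan is to prove directly that if $\lambda_{b_0}>0$ for a single point $b_0\in\partial H$, then $\lambda_b\ge\mu_b>0$ for every $b\in\partial H$, with an explicit lower bound $\mu_b$. The starting point is Proposition~\ref{lambda-stop}: since $\lambda_{b_0}>0$ we have $w_{\lambda_{b_0},b_0}\equiv0$ on $H$, that is,
\[
v_{b_0}(x)=\frac{\lambda_{b_0}^{n-2}}{|x|^{n-2}}\,v_{b_0}\!\Big(\frac{\lambda_{b_0}^2x}{|x|^2}\Big),\qquad x\in H\setminus\{0\}.
\]

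First I would read off from this identity the precise behaviour of $u$ at infinity. Letting $x\to0$ and writing $y=\lambda_{b_0}^2x/|x|^2$ (so $|y|\to\infty$), and using $|y|^{n-2}v_{b_0}(y)=u_{b_0}(y/|y|^2)\to u_{b_0}(0)=u(b_0)$, the right-hand side converges; hence $v_{b_0}(x)\to L_0:=u(b_0)\,\lambda_{b_0}^{\,2-n}$ as $x\to0$, equivalently $|z-b_0|^{n-2}u(z)\to L_0$, and so also $|z|^{n-2}u(z)\to L_0$, as $|z|\to\infty$ in $H$. Here $u>0$ on $\bar H$ (if $u$ vanished at a boundary point the Hopf lemma would contradict $\partial u/\partial t=-c_+u^q$), so $L_0>0$. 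Since this limit at infinity is unaffected by translating $z$ along $\partial H$, an identical Kelvin-transform computation applied to each $v_b$ shows that for \emph{every} $b\in\partial H$ the transform $v_b$ has a removable singularity at the origin, with $v_b(x)\to L_0$ as $x\to0$; and of course $|y|^{n-2}v_b(y)=u_b(y/|y|^2)\to u(b)$ as $|y|\to\infty$.

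Next, fix $b\in\partial H$ and $\mu>0$ and compute $\lim_{x\to0}w_{\mu,b}(x)$. The first term gives $v_b(x)\to L_0$. For the reflected term, with $y=\mu^2x/|x|^2$ one has $\frac{\mu^{n-2}}{|x|^{n-2}}v_b(y)=\mu^{2-n}\,|y|^{n-2}v_b(y)\to\mu^{2-n}u(b)$, so $w_{\mu,b}(x)\to L_0-\mu^{2-n}u(b)$. Consequently, whenever $0<\mu<\mu_b:=\big(u(b)/L_0\big)^{1/(n-2)}$ this limit is strictly negative, so $w_{\mu,b}$ is negative at points of $\overline{B_\mu^+}\setminus\{0\}$ arbitrarily close to the origin. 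By the definition of $\lambda_b$ no $\lambda<\mu$ is then admissible, so $\lambda_b\ge\mu$; letting $\mu\uparrow\mu_b$ gives $\lambda_b\ge\mu_b>0$. As $b$ was arbitrary, the proposition follows.

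Essentially everything here is routine bookkeeping with the Kelvin transform and with translations along $\partial H$; the one substantive point --- the step I expect to require the most care --- is the first one, namely that the \emph{equality} $w_{\lambda_{b_0},b_0}\equiv0$ forces $u$ to have the exact decay $L_0\,|z|^{2-n}$ at infinity, equivalently that every $v_b$ is bounded near the origin. Had no reflective sphere stopped, one would only know $v_b=O(|x|^{2-n})$ near the origin, and then $\lim_{x\to0}w_{\mu,b}(x)$ need not be negative, so the shortcut genuinely uses the hypothesis $\lambda_{b_0}>0$. I would also want to check that the convergences used in the third paragraph are honest limits rather than mere $\liminf$/$\limsup$ assertions, which is immediate since each of them is just the continuity of $u_b$ at the origin.
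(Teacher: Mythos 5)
Your argument is correct and is essentially the standard one: the paper itself disposes of this proposition by citing Claim 4 of Proposition 2.1 in \cite{LZ1995}, and what you have written is a faithful reconstruction of that argument. In particular, the chain is exactly as in \cite{LZ1995}: the hypothesis $\lambda_{b_0}>0$ combined with Proposition~\ref{lambda-stop} yields the identity $v_{b_0}(x)=\lambda_{b_0}^{2-n}u_{b_0}(x/\lambda_{b_0}^2)$, hence the exact decay $|z|^{n-2}u(z)\to L_0>0$ at infinity (your positivity of $L_0$ via the Hopf lemma is the right observation), hence $\lim_{x\to 0}w_{\mu,b}(x)=L_0-\mu^{2-n}u(b)<0$ for all $\mu$ below the explicit threshold $\mu_b=(u(b)/L_0)^{1/(n-2)}$, which by the definition of $\lambda_b$ forces $\lambda_b\ge\mu_b>0$. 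The one small thing I would make explicit is the step you flag at the end: from the scalar identity $w_{\lambda_{b_0},b_0}\equiv 0$ you are reading off $v_{b_0}(x)=\lambda_{b_0}^{2-n}u_{b_0}(x/\lambda_{b_0}^2)$, a genuine $C^1$ extension of $v_{b_0}$ across the origin, which is what upgrades $v_{b_0}=O(|x|^{2-n})$ to an honest finite limit; with that spelled out the proof is complete.
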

\begin{proof}
This proposition can be proved in the same way as that for Claim 4 of Proposition 2.1 in \cite{LZ1995}.
\end{proof}

\medskip

 We will also need the second technical Li-Zhu lemma which first appeared in  \cite[Lemma 2.5]{LZ1995}
 \begin{lemma}\label{keylemma2} (Lemma~$2.5$~ in~\cite{LZ1995}) Suppose $f\in C^1(H)\ (n\geq 3)$  satisfying: for all $b\in \partial H=\Bbb{R}^{n-1} ;$
there exists $\mu_b\in \Bbb{R}$ such that
$$
f(x'+b)= \frac {\mu_b ^{n-2}}{|x'|^{n-2}} f\Big( \frac {\mu_b^2 x'} {|x'|^2}+b \Big), \quad \forall x'\in \partial H\setminus \{0\}.
$$
Then for some $a\geq 0,\ d>0,\ x_0'\in \partial H,$
$$f(x')=\pm \big(\frac{a}{|x'-x_0'|^2+d}\big)^{(n-2)/2}, \quad    \forall x'\in\partial H.$$
\end{lemma}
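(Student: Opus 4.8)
The statement is quoted verbatim from Li and Zhu \cite[Lemma~2.5]{LZ1995}, so one option is simply to cite it; here is how I would reconstruct a self-contained proof. The first move is a reduction to a cleaner functional equation. If $f\equiv0$ the conclusion holds with $a=0$, so assume $f\not\equiv0$. Sending $|x'|\to\infty$ in the hypothesized identity shows that $|x'|^{n-2}f(x')$ has a finite limit $L$, that $L$ is independent of the center $b$, and that $\mu_b^{\,n-2}f(b)=L$ for all $b$; in particular $f$ is either identically zero or nowhere zero, so after replacing $f$ by $-f$ if necessary I may assume $f>0$ (this is where the sign $\pm$ in the conclusion enters). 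Then $L>0$ and $\mu_b=(L/f(b))^{1/(n-2)}$, so, setting $g:=f^{-2/(n-2)}>0$, the identity becomes the two--point functional equation
$$g\Big(b+\frac{\kappa\,g(b)\,(x'-b)}{|x'-b|^2}\Big)=\frac{\kappa\,g(b)}{|x'-b|^2}\,g(x'),\qquad\kappa:=L^{2/(n-2)}>0,$$
for all $b\in\R^{n-1}$ and all $x'\ne b$; sending $x'\to\infty$ here also gives $g(x')/|x'|^2\to1/\kappa$.

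Next I would show that this functional equation forces $g$ to be a positive--definite quadratic polynomial $g(x')=\alpha|x'-x_0'|^2+\delta$ with $\alpha>0$, $\delta>0$; raising to the power $-(n-2)/2$ then gives $f(x')=\big(a/(d+|x'-x_0'|^2)\big)^{(n-2)/2}$ with $a=\alpha^{-1}>0$, $d=\delta/\alpha>0$, as claimed. For the quadratic structure, note that the map $\Psi_b(y)=b+\kappa g(b)(y-b)/|y-b|^2$ restricts to the identity on the sphere $S_b=\{\,y:|y-b|^2=\kappa g(b)\,\}$ and has differential there equal to the reflection across the tangent hyperplane of $S_b$. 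Differentiating the functional equation once and evaluating on $S_b$ makes the gradient terms cancel and yields the first--order relation $\langle\nabla g(y),y-b\rangle=g(y)$ for every $b$ and every $y\in S_b$. Viewing this as an identity on the hypersurface $\{(y,b):|y-b|^2=\kappa g(b)\}$ and differentiating it in $y$ (a Lagrange--multiplier computation) shows that $y-b$ is an eigenvector of the Hessian $D^2g(y)$ for every such pair; since, as $b$ varies over the admissible set, the vectors $y-b$ span $\R^{n-1}$, we get $D^2g(y)=\lambda(y)\,\mathrm{Id}$. Symmetry of the third derivatives of $g$ together with $n-1\ge2$ then forces $\lambda$ to be constant, so $g$ is quadratic; finally $\alpha>0$ is forced by the growth $g(x')\sim|x'|^2/\kappa$ and $\delta=\min g>0$ because $g>0$.

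The main obstacle is the two points at which this sketch is cavalier. First, the hypothesis only provides $f\in C^1$, so before differentiating twice one must bootstrap the regularity of $g$ (to $C^2$, say) directly out of the functional equation. Second, one must verify that the relations produced on the spheres $S_b$ actually reach enough pairs $(y,b)$ and enough directions $y-b$ --- that is, control the geometry of $\bigcup_bS_b$ and of the constraint hypersurface --- and this is genuinely delicate because $g$ grows like $|x'|^2/\kappa$, so the defining function $|y-b|^2-\kappa g(b)$ is borderline as $|b|\to\infty$. Li and Zhu handle both issues by a careful quantitative analysis; short of reproducing it, the cleanest course in the present paper is to invoke \cite[Lemma~2.5]{LZ1995} directly.
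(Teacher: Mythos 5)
The paper does not prove this lemma: it is stated with the attribution ``Lemma~2.5 in \cite{LZ1995}'' and used as a black box, so there is no in-paper argument to compare yours against, and citing Li--Zhu --- which is how you close --- is exactly what the authors do. As a reconstruction of the Li--Zhu proof, your outline is structurally sound as far as it goes: the normalization $\mu_b^{n-2}f(b)=L$ from the $|x'|\to\infty$ limit, the dichotomy $f\equiv 0$ versus $f$ of one sign, the substitution $g=f^{-2/(n-2)}$ with $\mu_b^2=\kappa g(b)$ and $\kappa=L^{2/(n-2)}$, and the first-order identity $\langle\nabla g(y),\,y-b\rangle=g(y)$ for $|y-b|^2=\kappa g(b)$ are all correct, and that identity genuinely follows from nothing more than the stated $C^1$ regularity.

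Where the sketch stops being a proof is the passage from this first-order identity to ``$g$ is a positive-definite quadratic,'' and you flag the two gaps honestly: (i) taking the Hessian $D^2g$ presupposes $C^2$, which the hypothesis does not give and which would have to be bootstrapped from the functional equation first; (ii) the claim that, at each $y$, the admissible directions $y-b$ with $|y-b|^2=\kappa g(b)$ span $\mathbb{R}^{n-1}$ is asserted, not proved, and it is genuinely delicate because $g(x')\sim|x'|^2/\kappa$ makes the constraint $|y-b|^2=\kappa g(b)$ exactly critical as $|b|\to\infty$, so the geometry of that set cannot be read off a priori. These are the points where the actual work in \cite{LZ1995} lives. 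Since the paper simply invokes \cite[Lemma~2.5]{LZ1995}, the appropriate conclusion of your write-up --- cite it --- is the right one; the rest is a useful gloss rather than a self-contained proof.
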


\medskip
It is easy to see from Proposition \ref{lambda-stop}, Proposition \ref{lambda-stop-cor} and Lemma \ref{keylemma2} that
\begin{lemma}\label{boundary-solution} Let $u$ be a positive function satisfying the hypotheses of Theorem \ref{Class-critical}. Suppose $\lambda_b>0$ for some $b \in \partial H= \Bbb{R}^{n-1}, $ then we have for all $a,d>0,\ x_0'\in \partial H,$
$$
u(x',0)= \big(\frac{a}{|x'-x_0'|^2+d}\big)^{(n-2)/2}, \quad    \forall x'\in\partial H.
$$
\end{lemma}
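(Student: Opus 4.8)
The plan is to assemble the three ingredients cited just before the statement. Proposition~\ref{lambda-stop-cor} upgrades the hypothesis ``$\lambda_b>0$ for some $b$'' to ``$\lambda_b>0$ for every $b\in\partial H$''; Proposition~\ref{lambda-stop} then forces $w_{\lambda_b,b}\equiv 0$ on $H$ for each such $b$; and Lemma~\ref{keylemma2} converts a self-inversion identity on $\partial H$ into the explicit bubble profile. The only genuine work is the bookkeeping that rewrites $w_{\lambda_b,b}\equiv 0$, restricted to $\partial H$, as the hypothesis of Lemma~\ref{keylemma2} for the boundary trace $f(x'):=u(x',0)$.

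First I would fix $b\in\partial H$ and evaluate the identity $w_{\lambda_b,b}(x)=0$ at a boundary point $x=(x',0)$ with $x'\in\partial H\setminus\{0\}$, which is legitimate since $v_b$ and its reflection are continuous on $\bar H\setminus\{0\}$ by $u\in C^1(\bar H)$. Using $v_b(x)=|x|^{2-n}u_b(x/|x|^2)$ and $u_b(x',t)=u(x'+b,t)$ one gets on the boundary $v_b(x',0)=|x'|^{2-n}f(x'/|x'|^2+b)$, and similarly $v_b\big(\lambda_b^2x'/|x'|^2,0\big)=(\lambda_b^2/|x'|)^{2-n}f(x'/\lambda_b^2+b)$. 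Substituting these into
$$
w_{\lambda_b,b}(x',0)=v_b(x',0)-\frac{\lambda_b^{\,n-2}}{|x'|^{n-2}}\,v_b\Big(\frac{\lambda_b^2x'}{|x'|^2},0\Big)=0
$$
and then performing the inversion $z'=x'/|x'|^2$, the powers of $\lambda_b$ and $|x'|$ collapse and one is left with
$$
f(z'+b)=\frac{\mu_b^{\,n-2}}{|z'|^{n-2}}\,f\Big(\frac{\mu_b^2z'}{|z'|^2}+b\Big),\qquad\forall\,z'\in\partial H\setminus\{0\},
$$
where $\mu_b=1/\lambda_b>0$.

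Since this identity holds for every $b\in\partial H$ and $f=u(\cdot,0)$ is $C^1$ on $\partial H$, Lemma~\ref{keylemma2} applies and yields $f(x')=\pm\big(a/(|x'-x_0'|^2+d)\big)^{(n-2)/2}$ for some $a\ge 0$, $d>0$ and $x_0'\in\partial H$. It then remains only to fix the sign and rule out $a=0$: because $u>0$ on $\bar H$, the trace $f$ is strictly positive, so the $+$ sign must be chosen and $a=0$ is impossible, giving the formula with $a,d>0$ as claimed. I do not expect a real obstacle here; the point demanding care is the exponent and inversion bookkeeping in the middle paragraph (correctly evaluating $v_b$ at $\lambda_b^2x'/|x'|^2$ and checking that $z'=x'/|x'|^2$ produces exactly the form required by Lemma~\ref{keylemma2}), together with verifying that the lemma's hypotheses—$C^1$ regularity of $f$ on all of $\partial H$ and validity of the identity for \emph{every} $b$—are in place, which they are thanks to $u\in C^1(\bar H)$ and Proposition~\ref{lambda-stop-cor}.
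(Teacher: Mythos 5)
Your proposal is correct and follows exactly the route the paper has in mind: the paper merely asserts that the lemma ``is easy to see'' from Propositions \ref{lambda-stop}, \ref{lambda-stop-cor} and Lemma \ref{keylemma2}, and you have supplied precisely the missing bookkeeping (restricting $w_{\lambda_b,b}\equiv 0$ to $\partial H$, inverting $z'=x'/|x'|^2$, identifying $\mu_b=1/\lambda_b$, and discarding the minus sign and $a=0$ via positivity of $u$). The inversion computation checks out, so this is a faithful expansion rather than an alternative argument.
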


\medskip
{\bf Proof of Theorem \ref{Class-critical}.}\ We divide  the proof of this theorem into two cases.

Case 1.\ Suppose $\lambda_b=0$ for all $b \in \partial H= \Bbb{R}^{n-1}, $ then from Proposition \ref{lambda-notstop} we have
 $
u(x)=u(x', t)=u(0, t)=u(t)$ for $\forall x=(x',t)\in H.
$ Moreover, in terms of value range of  $c_+,$  $u(t)$ has three different expressions as in Proposition \ref{lambda-notstop}.

Case 2.\ Suppose $\lambda_b>0$ for some $b \in \partial H= \Bbb{R}^{n-1}, $ then from Lemma \ref{boundary-solution} we have for all $a,d>0,\ x_0'\in \partial H,$
$$
u(x',0)= \big(\frac{a}{|x'-x_0'|^2+d}\big)^{(n-2)/2}, \quad    \forall x'\in\partial H.
$$
As in \cite{LZ1995}, let $x_0=(x_0', -\sqrt{d}),$ set
$$
\varphi(x)=\frac {1}{|x-x_0|^{n-2}} u\Big( \frac {x-x_0} {|x-x_0|^2}+x_0\Big),
$$
and  $B=\{ (x', t)| [ (t+ \sqrt{d})/(|x'-x_0'|^2+(t+\sqrt{d})^2)]-\sqrt{d}>0\}.$ Actually, $B$ is a ball in $\Bbb R^n.$ Without loss of generality, we assume $a=1$ in Lemma
\ref{keylemma2}. A direct computation as in \cite{LZ1995} shows that $\varphi(x)=1$ on $\partial B.$
Moreover, we have
$$
\Delta \varphi (x)=n(n-2)\varphi ^p\quad \text{in}\ B.
$$
It is easy to see that the solution to this equation is unique. And it follows from the Strong Maximum Principle that $\varphi<1$  in $B.$
Define $\psi(x)=1-\varphi(x).$ Clearly, $\psi$ satisfies
\begin{equation}\label{theorem2.1-1}
  \left\{
  \aligned
   - \Delta \psi &= n(n-2)(1-\psi)^p\ \  \ \ \ \  \ \ \ \text{in} \ B \\
 \psi &=0   \ \ \ \  \ \ \  \ \ \ \   \ \ \  \ \ \ \   \ \ \   \ \ \ \  \ \ \ \  \text{on}  \
   \partial B,
    \endaligned
    \right.
\end{equation}
where $0<\psi<1$ in $B.$ Applying the result of \cite{GNN1979}, we know that $\psi$ is radically symmetric about the center of $B.$ Hence $\varphi(x)$ must take the form
$$
\varphi (x)=\big(\frac{\varepsilon}{\varepsilon ^2-|x-x_1|^2} \big)^{(n-2)/2}
$$
 for some $\varepsilon>0$ and $ \ x_1\in
\Bbb{R}^{n}.$ Then
\begin{equation}\label{theorem2.1-2}
   \aligned
   u(y) &=\frac {1}{|y-x_0|^{n-2}} \varphi\Big( \frac {y-x_0} {|y-x_0|^2}+x_0\Big), \\
 &= \big( \frac{\varepsilon}{(\varepsilon ^2-|x_0-x_1|^2)|y-x_0|^2-2(y-x_0)(x-x_0)-1  } \big)^{(n-2)/2} .
    \endaligned
\end{equation}
We hereby complete the proof of Theorem  \ref{Class-critical}.

 \vskip 1cm
 \noindent {\bf Acknowledgements}\\
 \noindent  S. Tang and L. Wang are supported by the China Scholarship Council for their study/research at the University of Oklahoma.  S. Tang and L. Wang would like to thank Department of Mathematics at  the  University of Oklahoma for its hospitality,  where this work has been done. The work of S. Tang is supported by the
National Natural Science Foundation of China (Grant No. 11801426) and Natural Science Basic Research Plan in Shaanxi Province of China (Program No. 2017JQ1022).
 %% bibliography--------------------------------------------------------------------
%\begin{center}
\small

\end{document}